\documentclass{article}
\usepackage[utf8]{inputenc}
\usepackage{graphicx}
\usepackage{amsthm}
\usepackage{amssymb}
\usepackage{amsmath}
\usepackage{setspace}
\usepackage{amsfonts}
\usepackage{comment}
\usepackage{appendix}
\usepackage{algorithm}
\usepackage[noend]{algorithmic}
\usepackage{tikz}
\usetikzlibrary{intersections, calc, arrows, positioning, arrows.meta}

\usepackage{color}

\theoremstyle{plain}
\newtheorem{theorem}{Theorem}
\newtheorem{prop}[theorem]{Proposition}
\newtheorem{lemma}[theorem]{Lemma}
\newtheorem{cor}[theorem]{Corollary}

\begin{document}

\title{Finding popular branchings in vertex-weighted digraphs}
\author{Kei Natsui\thanks{System and Information Engineering, Graduate School of Science and Technology, University of Tsukuba. \texttt{s2120448@s.tsukuba.ac.jp}} 
\and  Kenjiro Takazawa\thanks{Department of Industrial and Systems Engineering, 
    Faculty of Science and Engineering, Hosei University. 
    Supported by 
     JSPS KAKENHI Grant Number JP20K11699, Japan.
     \texttt{takazawa@hosei.ac.jp}. 
    }} 
\date{February 2022}

\maketitle
\begin{abstract}
Popular matchings have been intensively studied recently as a relaxed concept of stable matchings.
By 
applying the concept of popular matchings to branchings in directed graphs, Kavitha et al.\ introduced popular branchings. 
In a directed graph $G=(V_G,E_G)$, each vertex has preferences over its incoming edges. 
For branchings $B_1$ and $B_2$ in $G$, 
a vertex $v\in V_G$ prefers $B_1$ to $B_2$ if $v$ prefers its incoming edge of $B_1$ to that of $B_2$, 
where having an arbitrary incoming edge is preferred to having none, 
and 
$B_1$ is more popular than $B_2$ if the number of vertices that prefer $B_1$ is greater than the number of vertices that prefer $B_2$. 
A branching $B$ is called a popular branching if there is no branching more popular than $B$.
Kavitha et al.\ proposed an algorithm for finding a popular branching when the preferences of each vertex are given by a strict partial order.
The validity of this algorithm is proved by utilizing classical theorems on the duality of weighted arborescences. 
In this paper, we generalize popular branchings to weighted popular branchings in vertex-weighted directed graphs in the same manner as weighted popular matchings 
by Mestre. 
We give an algorithm for finding a weighted popular branching, which extends the algorithm of Kavitha et al., when the preferences of 
each vertex
are given by a total preorder and the weights satisfy certain conditions. 
Our algorithm includes elaborated procedures resulting from the vertex-weights, and its validity is proved by extending the argument of the duality of weighted 
arborescences.
\end{abstract}

\section{Introduction}
\label{SECintro}
\textit{Popular matchings} provide a relaxed concept of stable matchings.
Popular matchings were introduced by G\"{a}rdenfors \cite{gardenfors75}, and have been attracting intensive attention recently since Abraham et al.\ \cite{abraham07} started studying their algorithmic aspects.
In a bipartite graph, each vertex has preferences over its adjacent vertices, and a matching $M$ is \emph{more popular} than another matching $N$ if the number of vertices that prefer the adjacent vertex in $M$ to that in $N$ is greater than the number of vertices that prefer the adjacent vertex in $N$ to that in $M$. 
A matching $M$ is called a \emph{popular matching} if no matching is more popular than $M$. 
For popular matchings, several algorithms are known.
Abraham et al.\ \cite{abraham07} were the first to give an efficient algorithm determining whether a popular matching exists and finding one if exists.

There have been various other studies on popular matching in recent years, including \cite{BP10, Cseh17, dominantM17, quasiPM19, OPM09}.
Among those,
Mestre \cite{wpm06} provided an algorithm for weighted popular matching.
In the weighted popular matching problem, weights are attached to the vertices and, instead of the number of vertices, the popularity of matchings is defined by the sum of the weights of the corresponding vertices.
The algorithm 
\cite{wpm06}
runs in polynomial time regardless of whether ties are allowed or not.

By applying the concept of popular matchings to branchings in directed graphs, 
Kavitha, Kir\'{a}ly, Matuschke, Schlotter, and Schmidt-Kraepelin \cite{kavitha20} introduced \textit{popular branchings}.
In a directed graph $G=(V_G, E_G)$, each vertex has preferences over its incoming edges.
 Let $B$ and $B'$ be branchings in $G$. We say that a vertex $v\in V$ prefers $B$ to $B'$ if $v$ prefers 
 the incoming edge in $B$
 to that in $B'$, 
 where having an arbitrary incoming edge is preferred to having none.
We say that $B$ is more popular than $B'$ if the number of vertices that prefer $B$ is greater than the number of vertices that prefer $B'$. 
A branching $B$ is called a popular branching if there is no other branching more popular than $B$.

Kavitha et al.\ \cite{kavitha20} proposed an algorithm for finding a popular branching when the preferences of the vertices are given by a 
strict
partial order. 
This algorithm determines whether a popular branching exists, and if so, outputs one.
Its validity is proved by a characterization of popular branchings which utilizes the duality of weighted arborescences.
The algorithm constructs
a directed graph $D$ 
from $G$
by adding a dummy vertex $r$ as a root and an edge $(r,v)$ for each $v\in V_G$.
Each branching $B$ in $G$ is extended to an $r$-arborescence in $D$ by adding an edge $(r,v)$ for every vertex $v \in V_G$ with no incoming edge in $B$. 
They proved that an 
$r$-arborescence $A$ in $D$ is a popular arborescence
if and only if it is a minimum cost arborescence with respect to edge weights defined in a certain manner. 

For these edge weights, they further proved that, for an integral optimal solution $y\in \mathbb{R}^{2^{V_{G}}}$ of the dual problem, the laminar structure of the support $\mathcal{F}(y) = \{X\subseteq V_G : y(X)>0\}$
has at most two layers. 
This structure leads to a one-to-one correspondence between 
the support $\mathcal{F}(y)$ and the vertex set $V_G$, 
and 
the concept of \emph{safe edges}, 
which are candidates of the edges in a popular arborescence. 
The algorithm of Kavitha et al.\ \cite{kavitha20} essentially relies on this structure.
  
In this paper, we generalize popular branchings to weighted popular branchings in the same manner as weighted 
popular
matchings \cite{wpm06}. 
Each vertex $v\in V_G$ is assigned a positive integer weight $w(v)$. 
For an $r$-arborescence 
$A$ in $D$
and a vertex 
$v \in V_G$,
let $A(v)$ denote the edge in $A$ entering $v$. 
For two $r$-arborescences $A$ and $A'$ in $D$, we define an integer $\Delta_w(A,A')$ by
\begin{align*}
\Delta_w(A,A')=\sum_{v:A(v)\succ_vA'(v)}w(v)-\sum_{v:A'(v)\succ_vA(v)}w(v), 
\end{align*}
where $e\succ_v f$ denotes that $v$ prefers $e$ to $f$.
If $\Delta_w(A,A')>0$, we say that $A$ is more popular than $A'$. 
An $r$-arborescence $A$ 
in $D$
is a \emph{popular arborescence} if no arborescence is more popular than $A$.

The main contribution of this paper is an algorithm for finding a 
popular arborescence in vertex-weighted directed graphs,
which extends the algorithm of Kavitha et al.\ \cite{kavitha20}. 
Its validity builds upon a characterization of weighted popular arborescences, 
which extends that of popular branchings \cite{kavitha20}, and our algorithm includes elaborated procedures resulting from the vertex-weights.

The following two points are specific to our algorithm. 
The first is that 
the preferences of each vertex are given by a total preorder, 
while they are given by a strict partial order in \cite{kavitha20}. 
The second is that it requires an assumption on the vertex weights: 
\begin{quote}
for any vertex $s, t, u\in V_G$, 
\begin{align*}
w(s)+w(t)>w(u)
\end{align*}
holds.
\end{quote}
Under this assumption, we can derive that the laminar structure of the support of an integer dual optimal solution $y$ has at most two layers.
By virtue of this laminar structure, 
we can define 
the one-to-one correspondence between 
$\mathcal{F}(y)$ and 
$V_G$, 
and 
safe edges 
in the same manner as \cite{kavitha20}, 
which are essential in designing the algorithm. 

Let us mention 
an application of popular branchings in the context of a voting system, 
and 
what is offered by the generalized model of weighted  popular branchings.  
Kavitha et al.\ \cite{kavitha20} suggested an application in a voting system called liquid democracy.
This is a new voting system that lies between representative democracy and direct democracy. 
In liquid democracy, voters can choose to vote themselves or to delegate their votes to the judgment of others who they believe in, and their votes flow over a network, 
constructing a fluid voting system.
In this system,
a popular branching amounts to a reasonable delegation process. 
Here, if we take vertex weights into account, it
represents 
a situation where there is a difference in voting power.
That is, 
weighted popular branchings 
are of help 
when each voter has distinct voting power, and we want to make a decision based on the total voting power rather than the number of votes.

This paper is organized as follows. 
Section \ref{SECdefwpb} formally 
defines 
weighted popular branchings.
In Section \ref{SECproperties}, in preparation for algorithm design, 
we analyze some properties of weighted popular branchings and introduce safe edges. 
In Section \ref{SECalgorithm}, we present our algorithm for finding a weighted popular branching and prove its correctness.

\section{Definition of weighted popular branchings}
\label{SECdefwpb}
Let $G=(V_G, E_G)$ be a directed graph, where every vertex has a positive integer weight $w(v)$ and preferences over its incoming edges.
The preferences of each vertex $v$ are given by a \emph{total preorder}  $\precsim_v$ on the set of edges that enter $v$.

Recall that a total preorder is defined in the following way.
Let $S$ be a finite set.
A binary relation $R$ on $S$ is \emph{transitive} if, for all $a, b, c\in S$, $aRb$ and $bRc$ imply $aRc$.
Also, $R$ is \emph{reflexive} if  $aRa$ holds for all $a\in S$. 
A relation $R$ is called a \emph{preorder} if $R$ is transitive and reflexive. 
In addition, $R$ is a \emph{total relation} if  $aRb$ or $bRa$ holds for all $a,b\in S$. 
That is, 
a total preorder is a relation which is transitive, reflexive, and total. 
Note that a partial order is a preorder, 
whereas it is not necessarily a total preorder, 
and a total preorder is not necessarily a partial order.

Let $e$ and $f$ be two edges entering the same vertex $v$.
Then, $e\precsim_v f$ means that 
$f$ has more or the same priority than $e$. 
If 
both
$e\precsim_v f$ and $f\precsim_v e$ holds, we denote it by $e\sim_v f$,
indicating that $v$ is indifferent between $e$ and $f$.
Note that 
$\sim_v$ is an equivalence relation.
Furthermore, if $e\precsim_v f$ holds 
but
$f\precsim_v e$
does not,
we denote it by $e\prec_v f$. This indicates that vertex $v$ 
strictly prefers
$f$ to $e$.
If an edge 
$f$
is strictly preferred to 
$e$, 
then we say that 
$f$ \emph{dominates} $e$.

Instead of discussing branchings in $G$, 
we mainly handle arborescences in an auxiliary directed graph $D$.
Recall that the directed graph $D$ is constructed from $G$
by adding a dummy vertex $r$ as a root and an edge $(r,v)$ for each $v\in V_G$. That is, $D$ is represented as $D=(V, E)$, where
\begin{align*}
V=V_G\cup \{r\}, E=E_G \cup \{(r,v) : v\in V_G \}.
\end{align*}
For each vertex $v\in V_G$, let $\delta^-(v)\subseteq E$ be the set of edges in $D$ that enter $v$, 
and make $(r, v)$ the least preferred incoming edge in $\delta^-(v)$. 
That is, 
every edge in $E_G \cap \delta^-(v)$ dominates $(r,v)$ for each $v \in V_G$.

An $r$-arborescence in $D$ is an out-tree with root $r$.
Recall the following notation and definition mentioned in Section 1. 
For an $r$-arborescence $A$ in $D$ and $v\in V_G$, let $A(v)$ 
denote the edge in $A$ entering $v$. 
For $r$-arborescences $A$ and $A'$ in $D$, we define $\Delta_w(A,A')$ by
\begin{align}
\Delta_w(A,A')=\sum_{v:A(v)\succ_vA'(v)}w(v)-\sum_{v:A'(v)\succ_vA(v)}w(v).
\end{align}
An $r$-arborescence $A$ is more popular than $A'$ if $\Delta_w(A,A')>0$. If no $r$-arborescence is more popular than $A$, then we say that $A$ is a popular arborescence. 
Our primary goal is to find a popular arborescence in $D$. 

\section{Properties of weighted popular branchings}
\label{SECproperties}
\subsection{Characterizing weighted popular arborescences}

In this subsection, 
by extending the argument in \cite{kavitha20}, 
we give a characterization of popular arborescences (Proposition \ref{lemma14}) by utilizing the duality theory of weighted arborescences.
We then investigate the structure of dual optimal solutions with a certain property (Proposition \ref{prop18}).
Let $A$ be an $r$-arborescence 
in $D$. 
For each edge $e=(u,v)$ in $D$, we define the cost $c_A(e)$ as follows:
\begin{align}
c_A(e)=
\begin{cases}\label{cost}
0&\quad (e\succ_vA(v)), \\
w(v)&\quad (e\thicksim_vA(v)), \\
2w(v)&\quad (e\prec_vA(v)).
\end{cases}
\end{align}
Since $c_A(e)=w(v)$ for every $e\in A$, we have $c_A(A) = w(V_G)$. 
For an arbitrary $r$-arborescence $A'$ in $D$, the following holds:
\begin{align*}
c_A(A')&=\sum_{v: A(v)\succ_v A'(v)}2w(v)+\sum_{v: A(v)\sim_v A'(v)}w(v)+\sum_{v: A(v)\prec_v A'(v)}0\\
&=
w(V_G)
+\sum_{v: A(v)\succ_v A'(v)}w(v)-\sum_{v: A(v)\prec_v A'(v)}w(v)\\
&=
c_A(A)
+\Delta_w(A,A').
\end{align*}
We thus obtain the following proposition.
An $r$-arborescence is called a \emph{min-cost $r$-arborescence} if the sum of the costs of all edges is the smallest among the $r$-arborescences.
\begin{prop}\label{prop12}
An $r$-arborescence $A$ is popular if and only if it is a min-cost $r$-arborescence in $D$ with respect to the edge costs 
$c_A$.
\end{prop}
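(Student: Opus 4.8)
The plan is to read the statement off directly from the cost identity established immediately above the proposition, namely $c_A(A')=c_A(A)+\Delta_w(A,A')$, valid for every $r$-arborescence $A'$ in $D$. This identity already carries the substantive content, so the remaining task is purely to translate the definition of popularity into the language of costs while tracking the sign conventions in $\Delta_w$.

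First I would unfold the definition of a popular arborescence: $A$ is popular exactly when no $r$-arborescence $A'$ is more popular than $A$, that is, when $\Delta_w(A',A)\le 0$ for every $r$-arborescence $A'$ in $D$. I would then note the antisymmetry $\Delta_w(A',A)=-\Delta_w(A,A')$, which is immediate from the defining formula for $\Delta_w$ since the two sums merely exchange roles. Hence $A$ is popular if and only if $\Delta_w(A,A')\ge 0$ for every $A'$.

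Next I would substitute the cost identity. Writing $\Delta_w(A,A')=c_A(A')-c_A(A)$ turns the condition ``$\Delta_w(A,A')\ge 0$ for every $A'$'' into ``$c_A(A')\ge c_A(A)$ for every $A'$'', which is precisely the assertion that $A$ minimizes $c_A$ over all $r$-arborescences, i.e.\ that $A$ is a min-cost $r$-arborescence with respect to $c_A$. Reading this chain of equivalences in both directions yields the proposition.

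I do not expect a genuine obstacle here, as the essential work is already done by the preceding computation. The only point demanding care is the sign bookkeeping: one must check that ``more popular'', defined by the strict inequality $\Delta_w>0$, negates correctly to the non-strict inequality that characterizes the minimum, and that the comparison is taken over all $r$-arborescences $A'$ in $D$, consistently with the definition of a popular arborescence.
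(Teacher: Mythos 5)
Your proposal is correct and follows essentially the same route as the paper: the paper derives the identity $c_A(A')=c_A(A)+\Delta_w(A,A')$ immediately before the proposition and treats the remaining equivalence as immediate, which is exactly the sign bookkeeping (antisymmetry of $\Delta_w$ and negation of the strict inequality) that you spell out explicitly. No gap.
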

Based on Proposition \ref{prop12}, consider the following linear program (LP1), which describes the min-cost $r$-arborescence problem, and its dual (LP2).
For any non-empty set $X\subseteq V_G$, let $\delta^-(X) \subseteq E$ be the set of edges in $D$ that enter $X$. 
\begin{align}
&\text{(LP1)}  &&\text{minimize} &&\sum_{e\in E}c_A(e)\cdot x(e) \\
&&&\text{subject to} &&\sum_{e\in \delta^{-}(X)}x(e) \geq 1 && (X\subseteq V_G, X\neq\emptyset),\\
&&&&&x(e)\geq0 && (e\in E).
\end{align}
\begin{align}
&\text{(LP2)}  &&\text{maximize} &&\sum_{X\subseteq V_G, X\neq\emptyset}y(X) \\
&&&\text{subject to} &&\sum_{X: e\in \delta^{-}(X)}y(X) \leq c_A(e) && (e\in E), \label{lp2st1new}\\ 
&&&&&y(X) \geq 0 && (X\subseteq V_G,X\neq\emptyset).
\end{align}
For any feasible solution $y$ to (LP2), let $\mathcal{F}(y)=\{X\subseteq V_G : y(X)>0\}$ be the support of $y$. 

The following proposition is a direct consequence of the definition \eqref{cost} of the costs $c_A$.
\begin{prop}
\label{prop19}
For an $r$-arborescence $A$ and a 
feasible solution $y$ to (LP2), 
we have that $$\sum_{X\colon v \in X}y(X)\leq 2w(v)
\qquad (v \in V_G). $$
\end{prop}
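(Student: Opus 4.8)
The plan is to apply the feasibility constraint \eqref{lp2st1new} of (LP2) to a single carefully chosen edge, namely the dummy edge $(r,v)$. The key observation is that, because every $X$ in (LP2) satisfies $X \subseteq V_G$ and hence $r \notin X$, the edge $(r,v)$ belongs to $\delta^-(X)$ precisely when $v \in X$. Consequently, the constraint \eqref{lp2st1new} specialized to $e = (r,v)$ reads
\begin{align*}
\sum_{X \colon v \in X} y(X) \leq c_A\bigl((r,v)\bigr),
\end{align*}
so it will suffice to verify that $c_A\bigl((r,v)\bigr) \leq 2w(v)$.

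Next I would bound the cost of the dummy edge using its special status. Since $(r,v)$ is by construction the least preferred incoming edge of $v$, it can never strictly dominate $A(v)$, so the case $c_A(e)=0$ in \eqref{cost} is impossible for $e=(r,v)$. More precisely, if $A(v)=(r,v)$ then $(r,v)\sim_v A(v)$ and $c_A\bigl((r,v)\bigr)=w(v)$, while if $A(v)\in E_G$ then $(r,v)\prec_v A(v)$, because every edge of $E_G$ entering $v$ dominates $(r,v)$, and so $c_A\bigl((r,v)\bigr)=2w(v)$. In either case $c_A\bigl((r,v)\bigr)\leq 2w(v)$.

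Combining these two observations yields the claimed inequality immediately. I do not anticipate any real obstacle here; the only subtlety is recognizing that one should test the (LP2) constraint on the dummy edge rather than on an arbitrary edge, and that the least-preferred status of $(r,v)$ caps its cost at $2w(v)$. In particular, no summation over several edges and no appeal to the tree structure of $A$ are needed, which is exactly why the statement is a direct consequence of the definition \eqref{cost} of the costs $c_A$.
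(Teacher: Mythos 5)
Your proof is correct and follows exactly the paper's argument: the paper's one-line proof also tests the (LP2) constraint on the dummy edge $(r,v)$, using that $(r,v)\in\delta^-(X)$ precisely when $v\in X$, and then bounds $c_A(r,v)\leq 2w(v)$. Your extra case analysis on whether $A(v)=(r,v)$ is harmless but not needed, since every cost in \eqref{cost} is at most $2w(v)$ by definition.
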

\begin{proof}
For every vertex $v\in V_G$, it holds that 
\begin{align}
\sum_{X \colon v\in X} y(X) = \sum_{X : (r,v)\in \delta^-(X)}y(X) \leq c_A(r,v)\leq 2w(v).
\end{align}
\end{proof}
Furthermore, the following proposition can be derived by the duality of weighted arborescences, as described in \cite{kavitha20}.
\begin{prop}
\label{lemma14}
For an $r$-arborescence $A$, and an optimal solution $y_A^*$ to (LP2) the following statements are equivalent.
\begin{enumerate}
\item[(i)] $A$ is a popular arborescence.
\item[(ii)] $\sum_{X\subseteq V_G}y^*_A(X)=w(V_G)$.
\item[(iii)] $|A \cap \delta^-(X)|=1$ for all $X\in \mathcal{F}(y_A^*)$ and $\sum_{X:e\in \delta^-(X)}y^*_A(X)=w(v)$ for all $e=(u,v) \in A$.
\end{enumerate} 
\end{prop}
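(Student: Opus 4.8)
The plan is to derive the cycle of equivalences by combining Proposition \ref{prop12} with linear programming duality for the pair (LP1)--(LP2), exactly as in the classical theory of weighted arborescences referenced in the statement. Three ingredients drive everything: first, the indicator vector $x_A$ of the arborescence $A$ is feasible for (LP1), since every nonempty $X\subseteq V_G$ is entered by at least one edge of any $r$-arborescence (as $r\notin X$, some edge of $A$ crosses into $X$); second, the optimal value of (LP1) coincides with the minimum cost of an $r$-arborescence, because the covering system defining (LP1) is integral (the duality of weighted arborescences); and third, the identity $c_A(A)=w(V_G)$ recorded just before Proposition \ref{prop12}. I would then prove (i) $\Leftrightarrow$ (ii) and (ii) $\Leftrightarrow$ (iii) separately.

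For (i) $\Leftrightarrow$ (ii): by Proposition \ref{prop12}, $A$ is popular if and only if it is a min-cost $r$-arborescence with respect to $c_A$. By integrality the minimum cost over $r$-arborescences equals the optimal value of (LP1), and by strong duality this equals the optimal value $\sum_{X}y^*_A(X)$ of (LP2), since $y^*_A$ is optimal. As $c_A(A)=w(V_G)$, the feasible point $x_A$ attains the minimum precisely when $w(V_G)=\sum_X y^*_A(X)$, which is statement (ii).

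For (ii) $\Leftrightarrow$ (iii): I would apply complementary slackness to the feasible primal point $x_A$ and the optimal dual point $y^*_A$. Assuming (ii), the chain $c_A(A)=w(V_G)=\sum_X y^*_A(X)$ together with weak duality forces $x_A$ to be primal-optimal, so complementary slackness holds. Primal slackness states that for every $e\in A$ (where $x_A(e)>0$) the dual constraint \eqref{lp2st1new} is tight; since $c_A(e)=w(v)$ for $e=(u,v)\in A$, this gives $\sum_{X:e\in\delta^-(X)}y^*_A(X)=w(v)$. Dual slackness states that whenever $y^*_A(X)>0$, i.e. $X\in\mathcal{F}(y^*_A)$, the corresponding primal constraint is tight, giving $|A\cap\delta^-(X)|=1$. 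Together these are exactly (iii). Conversely, if (iii) holds then both families of complementary slackness conditions are satisfied by $(x_A,y^*_A)$, so $x_A$ is optimal for (LP1); hence $c_A(A)$ equals the common optimal value $\sum_X y^*_A(X)$, and $c_A(A)=w(V_G)$ yields (ii).

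The main obstacle I anticipate is not the duality bookkeeping but the two places where the specific piecewise costs \eqref{cost} must be handled carefully. First, one must justify that the LP optimum equals the combinatorial minimum over $r$-arborescences, i.e. invoke integrality of the system in (LP1) rather than argue it by hand. Second, when reading off complementary slackness, the decisive point is that for $e=(u,v)\in A$ the cost is $c_A(e)=w(v)$ and \emph{not} $0$, so tightness of the dual constraint at $e$ produces the weight $w(v)$ appearing in (iii); keeping track of this, and of the fact that dual slackness upgrades the primal inequality $|A\cap\delta^-(X)|\ge 1$ to the equality $|A\cap\delta^-(X)|=1$, is what makes (iii) the exact complementary slackness certificate for optimality.
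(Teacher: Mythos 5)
Your proof is correct and takes essentially the same approach as the paper: the paper gives no standalone argument for Proposition \ref{lemma14}, deriving it from ``the duality of weighted arborescences, as described in \cite{kavitha20}'', which is precisely the combination you spell out --- feasibility of the indicator vector $x_A$, integrality of the covering system (so the (LP1) optimum equals the minimum cost of an $r$-arborescence under the nonnegative costs $c_A$), strong duality, and complementary slackness read off against $c_A(e)=w(v)$ for $e=(u,v)\in A$. Nothing is missing; your treatment of the two subtleties you flag matches the intended derivation via Proposition \ref{prop12} and the classical results cited for Lemma \ref{lemma13}.
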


From now on, 
we deal with an optimal solution $y$ with certain properties. 
The first property is described in the following lemma.
A set family $\mathcal{F}$ is called \emph{laminar} if for any two sets $X, Y\in \mathcal{F}$, at least one of the three sets $X\setminus Y, Y\setminus X, X\cap Y$ is empty.

\begin{lemma}[\cite{EandG77, frank79b,fulkerson74}]
\label{lemma13}
If 
the costs $c_A$ are integers,
there exists an integral optimal solution $y^*$ to (LP2) such that $\mathcal{F}(y^*)$ is laminar.
\end{lemma}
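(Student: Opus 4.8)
The plan is to obtain the desired solution in two stages: first secure an \emph{integral} optimal dual solution, then repair its support to be \emph{laminar} by an uncrossing procedure that preserves both integrality and optimality. For the first stage, I would invoke the classical total dual integrality of the in-cut system $\{x \ge 0 : x(\delta^-(X)) \ge 1,\ \emptyset \ne X \subseteq V_G\}$ established in the cited works: since the costs $c_A$ are integers by assumption, (LP2) admits an integral optimal solution $y$. The remaining task is then purely combinatorial.

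For the second stage, suppose $\mathcal{F}(y)$ is not laminar, so there are $X, Y \in \mathcal{F}(y)$ that \emph{cross}, i.e.\ $X\setminus Y$, $Y\setminus X$, and $X\cap Y$ are all non-empty. I would set $\epsilon = \min\{y(X), y(Y)\}$, a positive integer, and perform the uncrossing
\begin{align*}
y(X) &\leftarrow y(X) - \epsilon, & y(Y) &\leftarrow y(Y) - \epsilon,\\
y(X\cap Y) &\leftarrow y(X\cap Y) + \epsilon, & y(X\cup Y) &\leftarrow y(X\cup Y) + \epsilon.
\end{align*}
This keeps $y$ integral, and the objective $\sum_{X} y(X)$ is unchanged because we subtract $2\epsilon$ and add back $2\epsilon$; hence optimality is preserved once feasibility is checked. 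For feasibility, fix an edge $e=(u,v)$ and write $\chi_X(e)=1$ exactly when $e\in\delta^-(X)$. The only dual constraint affected is the one for $e$, whose left-hand side changes by $\epsilon\bigl(\chi_{X\cap Y}(e)+\chi_{X\cup Y}(e)-\chi_X(e)-\chi_Y(e)\bigr)$. The submodularity of the directed in-cut indicator,
\begin{align*}
\chi_X(e)+\chi_Y(e)\ \ge\ \chi_{X\cap Y}(e)+\chi_{X\cup Y}(e),
\end{align*}
shows this change is at most $0$, so constraint \eqref{lp2st1new} continues to hold; nonnegativity of $y$ is immediate from the choice of $\epsilon$. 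Thus the updated $y$ is again an integral optimal solution.

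Finally, to guarantee termination I would track the potential $\Phi(y)=\sum_{X} y(X)\,|X|^2$. Writing $a=|X\setminus Y|\ge 1$ and $b=|Y\setminus X|\ge 1$, a direct computation gives $|X\cap Y|^2+|X\cup Y|^2-|X|^2-|Y|^2 = 2ab$, so each uncrossing strictly increases $\Phi$ by $2ab\,\epsilon\ge 2$. Since $\Phi(y)\le |V_G|^2\sum_{X} y(X)$ is bounded above by $|V_G|^2$ times the fixed optimal value, only finitely many uncrossings can occur, and when none remains the support $\mathcal{F}(y)$ is laminar. The one genuinely nontrivial input is the integrality of the dual in the first stage (the TDI property of the in-cut system); the uncrossing itself is routine once the submodular inequality above is in hand. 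I expect that inequality to be the main structural fact to verify with care, especially for the root edges $(r,v)$, where it degenerates into the inclusion–exclusion identity $\chi_X(e)+\chi_Y(e)=\chi_{X\cap Y}(e)+\chi_{X\cup Y}(e)$ since $r\notin X$ for every $X\subseteq V_G$.
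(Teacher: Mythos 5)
Your argument is sound, but there is nothing in the paper to compare it against line by line: the paper does not prove Lemma~\ref{lemma13} at all, it imports it wholesale from the cited works of Fulkerson, Frank, and Edmonds--Giles. So the honest comparison is with the classical proofs in those references, and there your proof runs in the opposite order. The classical argument establishes laminarity \emph{first}: among (possibly fractional) optimal dual solutions one picks a maximizer of a potential of exactly the kind you use (such as $\sum_{X} y(X)\,|X|^2$), shows via the same edgewise submodularity $\chi_X(e)+\chi_Y(e)\ge \chi_{X\cap Y}(e)+\chi_{X\cup Y}(e)$ that a maximizer cannot have two crossing sets in its support, and only then derives integrality, from the fact that the constraint matrix of a cross-free family is a network matrix and hence totally unimodular. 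You instead take integrality as a black box (the total dual integrality of the rooted in-cut system, which is precisely what the cited papers prove) and then repair laminarity by integrality-preserving uncrossing. This is logically valid and not circular, since plain dual integrality is a weaker statement than the lemma; your feasibility check (the submodular inequality, with equality for the root edges), the preservation of the objective, and the termination argument (each uncrossing raises $\Phi$ by $2ab\,\epsilon\ge 2$ against the fixed upper bound $|V_G|^2$ times the optimal value) are all correct. The trade-off is clear: your order keeps the write-up short and elementary but places the genuinely deep half of the lemma inside the citation, whereas the classical order is self-contained at the cost of the total-unimodularity step that you avoid. Either way the engine is the same uncrossing inequality, so as a proof of the lemma your proposal is acceptable.
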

Since
the costs $c_A(e)\in \{0, w(v), 2w(v)\}$ 
are integers, it follows from 
Lemma \ref{lemma13} that there exists an integral optimal solution  $y^*_A$ to (LP2) such that $\mathcal{F}(y^*_A)$ is laminar. 

Let $A \subseteq E$ be an $r$-arborescence and $y_A^*$ be an optimal solution for (LP2)\@.
Here, we consider the properties of $y_A^*$ and $\mathcal{F}(y^*_A)$.
Let $E^{\circ}$ be the set of edges $e\in E$ satisfying $\sum_{X: e\in \delta^-(X)}y^*_A(X)=c_A(e)$
and let $D^{\circ}=(V, E^{\circ})$.
For a directed graph $D'=(V', E')$ and 
its vertex subset $X \subseteq V'$, 
the subgraph induced by $X$ is denoted by 
$D'[X] = (X, E'[X])$. 
Similarly, 
for an edge subset $A' \subseteq E'$, 
the set of edges in $A'$ induced by $X$ is denoted by 
$A'[X]$. 
The following lemma applies to general weighted arborescences. 

\begin{lemma}
\label{lemma16}
For an $r$-arborescence $A \subseteq E$, 
there exists 
an integral optimal solution
$y_A^*$ 
to (LP2)
such that 
$\mathcal{F}(y_A^*)$ is laminar and
$D^{\circ}[Y]$ is strongly connected for every $Y\in \mathcal{F}(y_{A}^*)$.
\end{lemma}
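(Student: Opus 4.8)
The plan is to start from the integral, laminar, optimal solution guaranteed by Lemma \ref{lemma13} and to repair it by a weight-shifting (uncrossing) argument, using a potential function to guarantee termination. Concretely, among all integral optimal solutions $y$ to (LP2) whose support $\mathcal{F}(y)$ is laminar, I would take one minimizing the potential $\Phi(y)=\sum_{X}y(X)\,|X|$; since $y$ is integral and nonnegative, $\Phi(y)$ is a nonnegative integer, so a minimizer $y_A^*$ exists. I claim this minimizer already has the desired property, and I would argue by contradiction: if some $Y\in\mathcal{F}(y_A^*)$ has $D^\circ[Y]$ not strongly connected, I will produce another integral, laminar, optimal solution with strictly smaller potential.

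First I would record the consequences of complementary slackness. Let $A^*$ be a min-cost $r$-arborescence with respect to $c_A$, an integral optimal primal solution, which exists by the classical theory of weighted arborescences. Since complementary slackness holds for every pair of optimal primal and dual solutions, I obtain $A^*\subseteq E^\circ$ and $|A^*\cap\delta^-(X)|=1$ for every $X\in\mathcal{F}(y_A^*)$. Hence, for the offending set $Y$, the arborescence $A^*$ enters $Y$ exactly once, at some vertex $v_0\in Y$, and $A^*[Y]\subseteq E^\circ[Y]$ is a spanning out-arborescence of $Y$ rooted at $v_0$; in particular every vertex of $Y$ is reachable from $v_0$ in $D^\circ[Y]$. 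Taking $Y$ to be inclusion-minimal among the members of $\mathcal{F}(y_A^*)$ whose induced subgraph is not strongly connected, let $T$ be the strongly connected component of $v_0$ in $D^\circ[Y]$. Because $D^\circ[Y]$ is not strongly connected while everything is reachable from $v_0$, we have $\emptyset\ne T\subsetneq Y$, and no edge of $E^\circ[Y]$ enters $T$ from $Y\setminus T$, since such an edge combined with the reachability from $v_0$ would enlarge the component of $v_0$.

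I would then shift dual weight from $Y$ to $T$, decreasing $y_A^*(Y)$ and increasing $y_A^*(T)$ by a common integer $\epsilon\ge 1$. The objective $\sum_X y(X)$ is unchanged, so optimality is preserved, while $\Phi$ drops by $\epsilon(|Y|-|T|)>0$. Feasibility is the point to check: the only constraints whose left-hand side increases are those of edges running from $Y\setminus T$ into $T$, and these are not tight, so by integrality each has slack at least $1$; taking $\epsilon=1$ (or the largest admissible integer) keeps the solution feasible and integral. The main obstacle is preserving laminarity when $T$ is added to the support, and this is exactly where minimality of $Y$ is used: any $Z\in\mathcal{F}(y_A^*)$ disjoint from $Y$ is disjoint from $T$, any $Z\supseteq Y$ contains $T$, and any $Z\subsetneq Y$ is strongly connected in $D^\circ$ by minimality, so if such a $Z$ crossed $T$ then strong connectivity of $D^\circ[Z]$ would force a tight edge from $Z\setminus T\subseteq Y\setminus T$ into $Z\cap T\subseteq T$, contradicting the choice of $T$. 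Thus $\{T\}\cup\mathcal{F}(y_A^*)$ is laminar, and the new solution is again integral, laminar, and optimal with strictly smaller potential, contradicting the minimality of $y_A^*$. Therefore no such $Y$ exists, and $D^\circ[Y]$ is strongly connected for every $Y\in\mathcal{F}(y_A^*)$.
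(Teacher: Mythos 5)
Your proof is correct, and at its core it is the same exchange argument as the paper's: among integral optimal solutions with laminar support, take one minimizing the potential $\sum_X |X|\,y(X)$, and if some $Y\in\mathcal{F}(y_A^*)$ induces a non-strongly-connected $D^\circ[Y]$, shift one unit of dual value from $Y$ down to a source strongly connected component, preserving feasibility (the only constraints whose left-hand sides increase belong to non-tight edges, which have slack at least $1$ by integrality) and optimality while strictly decreasing the potential. Two points of comparison. First, your detour through complementary slackness --- taking a min-cost arborescence $A^*$, locating the entry vertex $v_0$ of $Y$, and defining $T$ as the strongly connected component of $v_0$ --- is sound but unnecessary: the condensation of $D^\circ[Y]$ into strongly connected components is a DAG, so a source component $Z$ with $\delta^-(Z)\cap E^\circ[Y]=\emptyset$ exists outright, which is all the paper uses. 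Second, and more substantively, your verification that the modified solution still has laminar support is a step the paper's proof silently omits: since the potential is minimized over \emph{laminar-supported} integral optimal solutions, the contradiction is only valid if the perturbed solution is again laminar-supported, and this is not automatic once $T$ is added to the support. Your device of choosing $Y$ inclusion-minimal among the offending sets, so that any $Z\subsetneq Y$ in the support induces a strongly connected $D^\circ[Z]$ and hence cannot meet both $T$ and $Y\setminus T$ without producing a tight edge entering $T$ from $Y\setminus T$, fills this gap cleanly. So your argument is not merely correct; on this point it is more complete than the proof in the paper.
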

\begin{proof}
Let $y_A^*$ be an optimal integral solution to (LP2) whose support is laminar. Among those, choose $y_A^*$ such that $\sum_{Y\in \mathcal{F}(y_A^*)}|Y|\cdot y_A^*(Y)$ is minimal.
Assume to the contrary that $D^\circ[Y]$ is not strongly connected for some $Y\in \mathcal{F}(y^*_A)$. 
Then there exists a strongly connected component $Z$ of $D^\circ[Y]$ such that $\delta^-(Z)\cap E^{\circ}[Y]=\emptyset$, that is, 
for all edges $e$ in $\delta^-(Z)$, $\sum_{X:e\in \delta^-(X)}y(X)<c_A(e)$ holds.
Define $y'\in \mathbb{R}^{{2^{V_G}}\setminus \{\emptyset\}}$ by 
\begin{align}
y'(X)=
\begin{cases}
y^*_{A}(X) + 1 &(X=Z), \\
y^*_{A}(X) - 1 &(X=Y),\\
y^*_{A}(X) &(\mathrm{otherwise}).
\end{cases}
\end{align}
Then, 
$\sum_{X:e\in \delta^-(X)}y'(X) \leq c_A(e)$ for all edges in $\delta^-(Z)$. Thus $y'$ is a feasible solution to (LP2) and since the objective function values for $y^*_{A}$ and $y'$ are equal, $y'$ is also an optimal solution to (LP2).
Since 
$\sum_{Y\in \mathcal{F}(y')}|Y| \cdot y'(Y) < \sum_{Y\in \mathcal{F}(y_A^*)}|Y|\cdot y_A^*(Y)$, 
this contradicts the minimality of $\sum_{Y\in \mathcal{F}(y^*_A)}|Y|\cdot y_A^*(Y)$. Therefore we conclude that $D^\circ[Y]$ is strongly connected for each $Y\in \mathcal{F}(y_A^*)$.
\end{proof}

In what follows, 
we denote by $y_A^*$ the integer optimal solution to (LP2) described in Lemma \ref{lemma16}.
In addition to Lemma \ref{lemma16}, if $A$ is a popular arborescence, 
we can impose a stronger condition on $y_A^*$. 

\begin{lemma}\label{lemma17}
For a popular arborescence $A$, there exist 
an integral optimal solution $y_A^*$ to (LP2) 
such that 
$\mathcal{F}(y_A^*)$ is laminar,  $D^{\circ}[Y]$ is strongly connected for every $Y\in \mathcal{F}(y_{A}^*)$, 
and 
the following is satisfied.

\begin{quote}
For $Y\in \mathcal{F}(y_A^*)$, let $Y_1, \ldots, Y_k$ be the sets in $\mathcal{F}(y_A^*)$ that are maximal proper 
subsets
of $Y$. Then,  
\begin{align}
\label{prop17eq}
|Y\setminus(Y_1\cup\cdots\cup Y_k)|=1.
\end{align}
\end{quote}
\end{lemma}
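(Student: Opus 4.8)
The plan is to show that the solution $y_A^*$ already produced by Lemma \ref{lemma16} satisfies \eqref{prop17eq}, so that no further re-selection is needed. Throughout I fix such a $y_A^*$ and use that, since $A$ is popular, it also satisfies Proposition \ref{lemma14}(iii): every $Y\in\mathcal{F}(y_A^*)$ has $|A\cap\delta^-(Y)|=1$, and $\sum_{X:e\in\delta^-(X)}y_A^*(X)=w(v)$ for each $e=(u,v)\in A$. The first fact means $A[Y]$ is a spanning arborescence of $Y$: exactly one vertex of $Y$, call it $\rho(Y)$, receives its $A$-edge from outside $Y$, and every other vertex receives it from inside. Writing $Y_0=Y\setminus(Y_1\cup\cdots\cup Y_k)$ for the private vertices, note that a vertex is private to $Y$ exactly when $Y$ is the inclusion-minimal member of $\mathcal{F}(y_A^*)$ containing it.

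First I would prove $|Y_0|\le 1$. Suppose $q\in Y_0$ with $q\neq\rho(Y)$, and let $A(q)=(q',q)$; since $q$ is not the root of $A[Y]$ we have $q'\in Y$. As $q$ is private to $Y$, every $X\in\mathcal{F}(y_A^*)$ containing $q$ satisfies $X\supseteq Y$ (by laminarity the members containing $q$ form a chain with minimal element $Y$), hence $q'\in Y\subseteq X$. Thus no member of the support separates $q'$ from $q$, so $\sum_{X:(q',q)\in\delta^-(X)}y_A^*(X)=0$, contradicting Proposition \ref{lemma14}(iii), which forces this sum to equal $w(q)>0$. Therefore $Y_0\subseteq\{\rho(Y)\}$.

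The substance is the reverse inequality $|Y_0|\ge 1$, i.e. that $\rho:=\rho(Y)$ is private to $Y$. Assume not; then $\rho$ lies in some child, say $\rho\in Y_1$, so $Y_0=\emptyset$. Because $D^\circ[Y]$ is strongly connected (Lemma \ref{lemma16}), there is a tight edge $g=(p,q)\in E^\circ$ with $p\in Y\setminus Y_1$ and $q\in Y_1$ re-entering the child $Y_1$. I would rule out each value of $c_A(g)$ from \eqref{cost}. If $c_A(g)=0$, then $\sum_{X:\,q\in X,\ p\notin X}y_A^*(X)=0$, but $Y_1$ appears in this sum with $y_A^*(Y_1)\ge 1$, a contradiction. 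If $c_A(g)=2w(q)$, then this same sum equals $2w(q)$, while Proposition \ref{prop19} gives $\sum_{X:q\in X}y_A^*(X)\le 2w(q)$; hence no member of the support contains both $p$ and $q$, contradicting $y_A^*(Y)>0$ with $p,q\in Y$.

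The remaining and decisive case is $c_A(g)=w(q)$, where I would play $g$ off against the $A$-edge entering $q$. Since $p\in Y\setminus Y_1$, laminarity shows the members of the support containing $q$ but not $p$ are exactly those $X\subseteq Y_1$ with $q\in X$, so $\sum_{X\subseteq Y_1,\ q\in X}y_A^*(X)=c_A(g)=w(q)$. If $q\neq\rho$, then $A(q)=(q',q)$ has $q'\in Y_1$, and the members containing $q$ but not $q'$ are proper subsets of $Y_1$; Proposition \ref{lemma14}(iii) makes their total equal $w(q)$, yet they form a subcollection of $\{X\subseteq Y_1:q\in X\}$ missing $Y_1$, giving $w(q)\le w(q)-y_A^*(Y_1)<w(q)$. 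If $q=\rho$, then $A(\rho)=(s,\rho)$ with $s\notin Y$, so every member of the support contained in $Y$ avoids $s$; these include all of $\{X\subseteq Y_1:\rho\in X\}$ together with $Y$ itself, whence $w(\rho)=\sum_{X:\rho\in X,\ s\notin X}y_A^*(X)\ge w(\rho)+y_A^*(Y)>w(\rho)$. Either way we reach a contradiction, so no such $g$ exists, contradicting strong connectivity; this forces $Y_0=\{\rho\}$. The main obstacle is precisely the case $c_A(g)=w(q)$: unlike the other two values it is not killed by a single constraint, and one must compare the ``re-entering'' tight edge $g$ with the $A$-edge at $q$, separating $q=\rho$ from $q\neq\rho$, to produce a strict inequality. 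I note that the whole argument uses only Lemma \ref{lemma16}, Proposition \ref{lemma14}, and Proposition \ref{prop19}, and needs neither the vertex-weight assumption nor any extremal re-selection of $y_A^*$.
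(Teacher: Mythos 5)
Your proposal is correct, but it follows a genuinely different route from the paper, and in fact proves a slightly stronger statement. The paper does \emph{not} show that every solution from Lemma~\ref{lemma16} works: it re-selects $y_A^*$ to minimize $|\mathcal{F}(y_A^*)|$ among Lemma~\ref{lemma16} solutions, and this extremal choice is essential to its argument. Concretely, writing $v=\rho(Y)$ for the entry-point, the paper splits on whether $\{v\}\in\mathcal{F}(y_A^*)$: if yes, it merges $y_A^*(Y)$ into $y_A^*(\{v\})$ to get a feasible optimal solution with smaller support, contradicting minimality; if no, it invokes Lemma~\ref{lemma15} (a forward reference) to get children, and then uses strong connectivity of \emph{both} $D^\circ[Y]$ and $D^\circ[Y_1]$ to build a chain of cost inequalities $c_A(e'')<c_A(e')<c_A(r,v')$ that clashes with $c_A$ taking only the three values in \eqref{cost}. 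You instead take a single tight edge $g=(p,q)$ re-entering the child $Y_1$ that contains $\rho$, and kill each of the three possible values of $c_A(g)$ using only Proposition~\ref{lemma14}(iii), Proposition~\ref{prop19}, and laminarity; this handles the paper's two cases uniformly (the case $Y_1=\{\rho\}$ is just your $q=\rho$ branch), dispenses with Lemma~\ref{lemma15} and with the support-minimization, and yields the cleaner conclusion that \emph{any} Lemma~\ref{lemma16} solution satisfies \eqref{prop17eq}.

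One step in your decisive case should be spelled out, since it is exactly where your choice of $Y_1$ matters: you assert that if $q\neq\rho$ then the tail $q'$ of $A(q)$ lies in $Y_1$. This is true, but only because $Y_1$ is the child containing $\rho$: since $\rho\in Y_1$ and the tail of $A(\rho)$ lies outside $Y\supseteq Y_1$, the edge $A(\rho)$ enters $Y_1$, and Proposition~\ref{lemma14}(iii) gives $|A\cap\delta^-(Y_1)|=1$, so $A(\rho)$ is the \emph{unique} $A$-edge entering $Y_1$ and every $q\in Y_1\setminus\{\rho\}$ receives its $A$-edge from inside $Y_1$. For a child not containing $\rho$ the assertion can fail (the tight edge could land on that child's own entry-point, and then your two equalities are consistent rather than contradictory), so this one-line justification is needed to make the argument airtight.
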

\begin{proof}
 Among 
 $y_A^*$ 
 described in
 Lemma \ref{lemma16}, 
 consider $y_A^*$ for which $|\mathcal{F}(y_A^*)|$ is minimal.
 Let $Y\in \mathcal{F}(y^*_{A})$ and $Y'=Y\setminus (Y_1\cup\cdots\cup Y_k)$. 
We first show that $|Y'|\leq 1$. Since $c_A(e)=w(v)>0$ for $e=(u,v)\in A$, it follows from Proposition \ref{lemma14}(iii) that $|A[Y']|=0$ and $|A\cap \delta^-(Y')\cap\left(\bigcup_{i=1, \ldots, k}\delta^+(Y_i)\right)|=0$.
It is also derived from Lemma \ref{lemma14}(iii) that $|A\cap\delta^-(Y)|=1$, and thus
\begin{align}
\notag
|Y'|&=|A\cap\delta^-(Y)\cap\delta^-(Y')|+|A[Y']|+\left| A\cap \delta^-(Y')\cap\left(\bigcup_{i=1,\ldots, k}\delta^+(Y_i)\right)\right|\\
\notag
&=|A\cap\delta^-(Y)\cap\delta^-(Y')|\\
\notag
&\leq|A\cap\delta^-(Y)\cap\delta^-(Y')|+\left| A\cap\delta^-(Y)\cap\left( \bigcup_{i=1,\ldots k}\delta^-(Y_i)\right)\right|\\
\label{|Y'|=1}
&=|A\cap \delta^-(Y)|
=1.
\end{align}

Suppose to the contrary that $Y'=\emptyset$. Let $e=(u,v)\in A\cap \delta^-(Y)$.
First, we consider the case where 
$\{v\}\in \mathcal{F}(y_A^*)$. 
It follows from $e\in A$ that $c_A(e)=w(v)$. 
For any edge $(u',v)\in \delta^-(v)$, it holds that $c_A(u',v)\geq y^*_{A}(\{v\})>0$ from the constraint (\ref{lp2st1new}).
 We know that $c_A(u',v)\in \{2w(v), w(v), 0\}$, and hence $c_A(u',v)\geq w(v)$ holds. 
Let $y^*_{A}(Y)=\alpha$ and $y^*_{A}(\{v\})=\beta$. Then $w(v)=c_A(e)\geq y^*_{A}(Y)+y^*_{A}(\{v\})=\alpha+\beta$. 
Thus, if we replace these two values with $y^*_{A}(Y)=0$ and $y^*_{A}(\{v\})=\alpha+\beta$, we can construct $y^*_{A}$ with a smaller value of $|\mathcal{F}(y_A^*)|$ while satisfying the constraints of (LP2). 
This contradicts the minimality of $|\mathcal{F}(y_A^*)|$.

Next, consider the case where $\{v\}$ is not included in $\mathcal{F}(y_A^*)$.  
In this case, $|Y|\geq 2$ holds. 
It then follows from Lemma \ref{lemma15} that $Y$ is not minimal in $\mathcal{F}(y_A^*)$, that is, $Y_1, \ldots, Y_k \in \mathcal{F}(y^*_{A})$ always exist. 
Since 
$Y'=\emptyset$, without loss of generality we can assume that $v\in Y_1 \in \mathcal{F}(y_A^*)$. Note that $|Y_1|\geq 2$ since $\{v\} \notin \mathcal{F}(y_A^*)$. Now, since we have chosen $y^*_{A}$ described in
Lemma \ref{lemma16}, $D^\circ[Y]$ is strongly connected. 
Therefore, there exists $e'=(u',v')\in E^\circ[Y]\cap \delta^-(Y_1)$. 
Similarly, $D^\circ[Y_1]$ is also strongly connected, and hence there exists $e''=(u'',v')\in E^\circ[Y_1]\cap \delta^-(v')$. 
Then, we have $\{X\in \mathcal{F}(y_A^*) : e''\in X\}\subsetneq \{X\in \mathcal{F}(y_A^*) : e'\in X\}$. 
If $v'\neq v$, since $r\notin Y$ and 
each edge $e \in E^{\circ}$ satisfies
$\sum_{X: e\in \delta^-(X)}y(X)=c_A(e)$, it holds that $c_A(e'')<c_A(e')<c_A(r,v')$, which implies  $c_A(e'')=0$ and $c_A(e')=w(v')$.
Now, since $e''\in E^\circ$, there is no edge $e^*\in E^\circ[Y_1]\cap\delta^-(v')$ such that $c_A(e^*)=w(v')$. 
Therefore $A(v')$ cannot be taken from $E^\circ [Y_1]$, contradicting that $|Y_1|\geq 2$ and $|A\cap \delta^-(Y_1)|=1$. 
If $v'=v$, it follows from $e=(u,v)\in \delta^-(Y)$ that $c_A(e'')<c_A(e')<c_A(e)=w(v)$ as in the previous discussion. However, this contradicts the 
definition 
of cost $c_A$: $c_A(e'), c_A(e'')\in \{0, w(v), 2w(v)\}$.

From the above, there is no such $Y\in \mathcal{F}(y_A^*)$ for which $Y'=\emptyset$, and from (\ref{prop17eq}), we conclude that $|Y'|=1$.
\end{proof}

Furthermore, we can derive the following lemma holds.
\begin{lemma}
\label{lemma15}
If $A$ is a popular arborescence and a set $Y\in \mathcal{F}(y_A^*)$ is minimal in $\mathcal{F}(y_A^*)$, then $|Y|=1$.
\end{lemma}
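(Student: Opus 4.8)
The plan is to argue by contradiction: assume that $Y\in\mathcal{F}(y_A^*)$ is minimal but $|Y|\geq 2$, and derive a violation of Proposition \ref{lemma14}(iii). I would start from a counting identity for the arborescence $A$ restricted to $Y$. Since $A$ is an $r$-arborescence, every vertex of $Y\subseteq V_G$ has exactly one incoming edge in $A$, and these edges split into those entering $Y$ from outside and those lying inside $Y$; that is, $|A\cap\delta^-(Y)|+|A[Y]|=|Y|$. By Proposition \ref{lemma14}(iii) we have $|A\cap\delta^-(Y)|=1$ (as $Y\in\mathcal{F}(y_A^*)$ and $A$ is popular), hence $|A[Y]|=|Y|-1\geq 1$. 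Thus there is at least one edge $f=(p,q)$ of $A$ with both endpoints $p,q\in Y$.

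The key step is to show that no set of $\mathcal{F}(y_A^*)$ separates the endpoints of such an internal edge. Here I would invoke laminarity of $\mathcal{F}(y_A^*)$ (Lemma \ref{lemma16}) together with the minimality of $Y$. For any $X\in\mathcal{F}(y_A^*)$, laminarity forces exactly one of $X\cap Y=\emptyset$, $Y\subseteq X$, or $X\subseteq Y$ to hold; and in the last case minimality of $Y$ gives $X=Y$. In each of these three cases both $p$ and $q$ lie on the same side of $X$ (either both outside $X$ or both inside $X$), so $f\notin\delta^-(X)$. Consequently $\sum_{X:f\in\delta^-(X)}y_A^*(X)=0$.

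This contradicts Proposition \ref{lemma14}(iii): since $A$ is popular and $f=(p,q)\in A$, that statement requires $\sum_{X:f\in\delta^-(X)}y_A^*(X)=w(q)$, which is strictly positive because the weights are positive integers. Therefore the assumption $|Y|\geq 2$ is untenable, and a minimal set of $\mathcal{F}(y_A^*)$ must satisfy $|Y|=1$.

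I expect the only delicate point to be the case analysis in the second paragraph, where one must check that the three laminarity alternatives are exhaustive and that minimality is precisely what excludes any proper nonempty subset of $Y$ from the support. It is worth emphasizing that this argument relies only on Proposition \ref{lemma14} and the laminarity from Lemma \ref{lemma16}, and in particular does not use Lemma \ref{lemma17}; this is essential to avoid circularity, since the proof of Lemma \ref{lemma17} already appeals to the present statement.
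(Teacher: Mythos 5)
Your proposal is correct and follows essentially the same route as the paper: count incoming edges of $A$ over $Y$ to get $|A[Y]|=|Y|-1\geq 1$ via Proposition \ref{lemma14}(iii), pick an edge of $A$ internal to $Y$, and observe that by laminarity and the minimality of $Y$ this edge enters no set of $\mathcal{F}(y_A^*)$, contradicting Proposition \ref{lemma14}(iii). Your explicit three-case laminarity analysis and the remark on avoiding circularity with Lemma \ref{lemma17} are points the paper leaves implicit, but the argument is the same.
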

\begin{proof}
Suppose to the contrary that $|Y|\geq 2$ for a minimal set $Y$ in $\mathcal{F}(y_A^*)$. 
Since $Y\in \mathcal{F}(y_A^*)$, by Proposition \ref{lemma14}(iii), we have $|A\cap \delta^-(Y)|=1$. 
Also, since $A$ is an $r$-arborescence, it holds that $|A\cap(\bigcup_{v\in Y}\delta^{-}(v))|=|Y|$, and hence $|A[Y]|=|A\cap(\bigcup_{v\in Y}\delta^{-}(v))|-|A\cap \delta^-(Y)|=|Y|-1\geq 1$.
 It then follows that  $A[Y] \neq \emptyset$, and let $e$ be an edge in $A[Y]$. Now $Y$ is a minimal set in $\mathcal{F}(y_A^*)$, implying that $e\notin \delta^-(Y')$ for any $Y'\in \mathcal{F}(y_A^*)$. This contradicts Proposition \ref{lemma14}(iii).
\end{proof}

From Lemmas \ref{lemma17} and \ref{lemma15}, the next proposition follows.

\begin{prop}
\label{prop18}
Let $A$ be a popular arborescence. Then, there exists a one-to-one correspondence between the sets in $\mathcal{F}(y_A^*)$ satisfying (\ref{prop17eq}) and the vertices in $V_G$. 
Moreover, for each $X\in \mathcal{F}(y^*_{A})$ and the terminal vertex $v$ of the edge $(u,v)\in A\cap \delta^-(X)$, we have $y^*_{A}(X)=w(v)$. 
\end{prop}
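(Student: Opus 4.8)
The plan is to exhibit an explicit map $\phi\colon \mathcal{F}(y_A^*)\to V_G$ and show it is a bijection carrying the stated weight identity. For each $X\in\mathcal{F}(y_A^*)$, Lemma \ref{lemma17} guarantees that $X\setminus(X_1\cup\cdots\cup X_k)$ is a singleton, where $X_1,\ldots,X_k$ are the maximal proper subsets of $X$ lying in $\mathcal{F}(y_A^*)$; I would define $\phi(X)$ to be its unique element. The first step I would record is a geometric fact already isolated inside the proof of Lemma \ref{lemma17}: since $|A\cap\delta^-(X)|=1$ by Proposition \ref{lemma14}(iii), and since that proof shows the lone edge of $A\cap\delta^-(X)$ enters the singleton $X\setminus(X_1\cup\cdots\cup X_k)$ rather than any $X_i$, the terminal vertex of the unique edge of $A\cap\delta^-(X)$ is exactly $\phi(X)$. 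This identifies the vertex $v$ in the second assertion of the proposition as $\phi(X)$, and it is the linchpin tying the combinatorics of $\mathcal{F}(y_A^*)$ to the arborescence $A$.

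Next I would prove injectivity of $\phi$ from laminarity alone. If $\phi(X)=\phi(X')=v$ with $X\neq X'$, then $v$ lies in both, so the sets are not disjoint and laminarity forces comparability; say $X\subsetneq X'$. Then $X$ is contained in some maximal proper subset $X'_i$ of $X'$ in $\mathcal{F}(y_A^*)$, whence $v\in X\subseteq X'_i$, contradicting $v=\phi(X')\in X'\setminus(X'_1\cup\cdots)$. For surjectivity, fix $v\in V_G$ and consider its incoming edge $A(v)=(u,v)$ in $A$. By Proposition \ref{lemma14}(iii), $\sum_{X\colon A(v)\in\delta^-(X)}y_A^*(X)=w(v)>0$, so some $X\in\mathcal{F}(y_A^*)$ has $A(v)\in\delta^-(X)$; since $|A\cap\delta^-(X)|=1$ and $A(v)$ already lies in $A\cap\delta^-(X)$, the edge $A(v)$ is the unique edge of $A$ entering $X$, and by the geometric fact above its terminal vertex $v$ equals $\phi(X)$. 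Thus $\phi$ is onto, and with injectivity it is the desired one-to-one correspondence (in particular $|\mathcal{F}(y_A^*)|=|V_G|$).

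For the weight identity I would fix $X\in\mathcal{F}(y_A^*)$, set $v=\phi(X)$, so that the unique edge of $A\cap\delta^-(X)$ is $A(v)$, and again invoke Proposition \ref{lemma14}(iii): $\sum_{X'\colon A(v)\in\delta^-(X')}y_A^*(X')=w(v)$. It then suffices to show $X$ is the only member of $\mathcal{F}(y_A^*)$ whose boundary contains $A(v)$. Any such $X'$ contains $v$, hence is comparable to $X$; if $X'\subsetneq X$ then $X'$ sits inside a maximal proper subset of $X$, forcing $v\in X'$ into that subset and contradicting $v=\phi(X)$, while if $X\subsetneq X'$ then the geometric fact gives $\phi(X')=v=\phi(X)$, contradicting injectivity. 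Hence the sum collapses to the single term $y_A^*(X)$, giving $y_A^*(X)=w(v)$.

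I expect the main obstacle to be Step 1, namely cleanly certifying that the unique edge of $A\cap\delta^-(X)$ enters $\phi(X)$ rather than descending into one of the inner sets $X_i$. This is not a formal consequence of the statement of Lemma \ref{lemma17} but of the edge-counting inside its proof, i.e.\ the decomposition of $|X\setminus(X_1\cup\cdots\cup X_k)|$ into the three edge classes together with $A[X\setminus(\cdots)]=\emptyset$ and the vanishing of the $\delta^+(X_i)$ term. I would therefore either restate this as an auxiliary observation following Lemma \ref{lemma17} or reproduce the short counting argument here; once it is in hand, injectivity, surjectivity, and the weight identity all follow mechanically from laminarity and Proposition \ref{lemma14}(iii).
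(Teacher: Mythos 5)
Your proof is correct and matches the paper's intended derivation: the paper states Proposition \ref{prop18} as an immediate consequence of Lemmas \ref{lemma17} and \ref{lemma15} without writing out the details, and your argument --- defining $\phi(X)$ as the unique element of $X\setminus(X_1\cup\cdots\cup X_k)$, extracting from the counting inside the proof of Lemma \ref{lemma17} that the unique edge of $A\cap\delta^-(X)$ terminates at that element, and then using laminarity together with Proposition \ref{lemma14}(iii) for injectivity, surjectivity, and the collapse of the dual sum to $y_A^*(X)=w(v)$ --- is exactly the elaboration that derivation requires. Your closing observation that the ``geometric fact'' is a consequence of the edge-counting in Lemma \ref{lemma17}'s proof rather than of its statement is accurate, and recording it as an auxiliary remark (or reproducing the short count) is the right way to make the correspondence fully rigorous.
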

Denote by $Y_v$ the unique set in $\mathcal{F}(y^*_A)$ that is in correspondence with $v$ in the sense of Proposition \ref{prop18}.
Note that  
$\mathcal{F}(y_A^*)=\{Y_v:v\in V_G\}$ 
and
the unique edge in $A$ entering $Y_v$ is $A(v)$.
We thus refer to $v$ as the
\emph{entry-point}
of $Y_v$.
An example of these correspondences is shown in Figure \ref{entrypoint}.

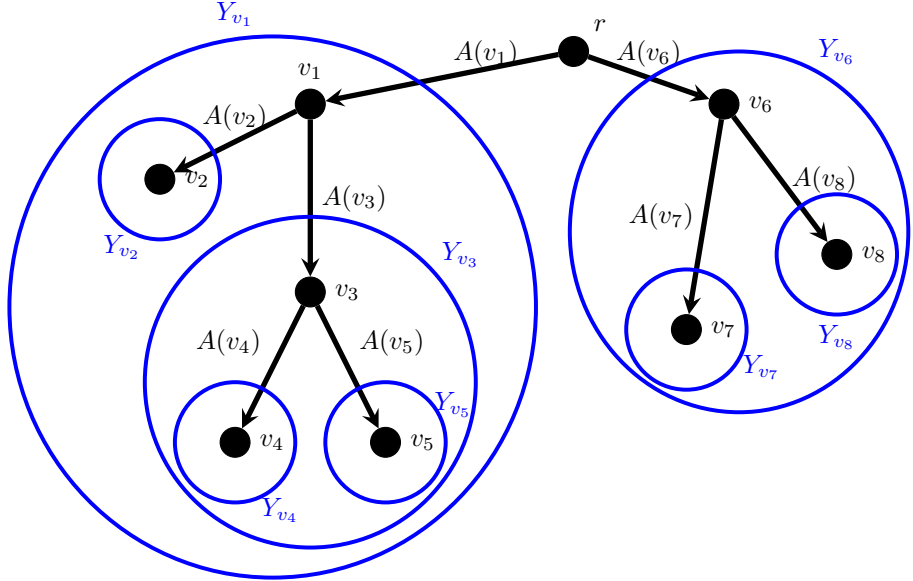
\begin{figure}
\begin{center}
\begin{tikzpicture}

\node (r) [fill, draw, circle, inner sep=4, label=above right:$r$] at (7.5,7.7){};
\node (v1) [fill, draw, circle, inner sep=4, label=above:$v_1$] at (4,7){};
\node (v2) [fill, draw, circle, inner sep=4, label=right:$v_2$] at (2,6){};
\node (v3) [fill, draw, circle, inner sep=4, label=right:$v_3$] at (4,4.5){};
\node (v4) [fill, draw, circle, inner sep=4, label=right:$v_4$] at (3,2.5){};
\node (v5) [fill, draw, circle, inner sep=4, label=right:$v_5$] at (5,2.5){};
\node (v6) [fill, draw, circle, inner sep=4, label=right:$v_6$] at (9.5,7){};
\node (v7) [fill, draw, circle, inner sep=4, label=right:$v_7$] at (9,4){};
\node (v8) [fill, draw, circle, inner sep=4, label=right:$v_8$] at (11,5){};

\path[line width=2, ->,>=stealth] 
(r) edge[above right] node{$A(v_1)$} (v1)
(r) edge[above] node{$A(v_6)$} (v6)
(v1) edge[above] node{$A(v_2)$} (v2)
(v1) edge[right] node{$A(v_3)$} (v3)
(v3) edge[above left] node{$A(v_4)$} (v4)
(v3) edge[above right] node{$A(v_5)$} (v5)
(v6) edge[left] node{$A(v_7)$} (v7)
(v6) edge[right] node{$A(v_8)$} (v8);

\draw [line width=1.5, blue] (v2) circle [x radius=0.8, y radius=0.8];
\draw [line width=1.5, blue] (v4) circle [x radius=0.8, y radius=0.8];
\draw [line width=1.5, blue] (v5) circle [x radius=0.8, y radius=0.8];
\draw [line width=1.5, blue] (v7) circle [x radius=0.8, y radius=0.8];
\draw [line width=1.5, blue] (v8) circle [x radius=0.8, y radius=0.8];
\draw [line width=1.5, blue] (3.5,4.3) circle [x radius=3.5, y radius=3.6];
\draw [line width=1.5, blue] (4,3.3) circle [x radius=2.2, y radius=2.2];
\draw [line width=1.5, blue] (9.7,5.3) circle [x radius=2.25, y radius=2.4];

\node (Yv) [text=blue] at (3,8.2) {$Y_{v_1}$};
\node (Yv) [text=blue] at (1.5,5.1) {$Y_{v_2}$};
\node (Yv) [text=blue] at (6,5) {$Y_{v_3}$};
\node (Yv) [text=blue] at (3.6,1.6) {$Y_{v_4}$};
\node (Yv) [text=blue] at (5.9,3) {$Y_{v_5}$};
\node (Yv) [text=blue] at (11,7.7) {$Y_{v_6}$};
\node (Yv) [text=blue] at (10,3.5) {$Y_{v_7}$};
\node (Yv) [text=blue] at (11,3.9) {$Y_{v_8}$};
\end{tikzpicture}
\end{center}
\caption{One-to-one correspondence between $v\in V_G$ and $Y_v\in \mathcal{F}(y_A^*)$}
\label{entrypoint}
\end{figure}

\subsection{Weight assumption and safe edges}

In Kavitha et al.'s algorithm for finding popular branching \cite{kavitha20},
the laminar structure of $\mathcal{F}(y^*_{A})$ has at most two layers: 
\begin{align}
\label{EQtwolayer}
|\{X\in \mathcal{F}(y^*_{A}) \mid v\in X\}| \leq 2 \quad (v \in V_G).
\end{align}
This structure plays an important role in the algorithm.
By \eqref{EQtwolayer},
we can construct the vertex set corresponding to $X \in \mathcal{F}(y_{A}^*)$ in the algorithm using the concept of safe edge.
This can be utilized to prove the validity of the algorithm. 

In the unweighted case, 
\eqref{EQtwolayer} follows from 
Proposition \ref{prop19}. 
However, when the weights $w(v)$ can be more than one, Proposition \ref{prop19} alone does not rule out the case where $|\{X \in \mathcal{F}(y_A^*) \mid v\in X\}| \geq 3$. 
In order to maintain \eqref{EQtwolayer},
as mentioned in Section \ref{SECintro}, 
we impose 
an assumption 
on the vertex weights. 
Recall that the assumption is:
\begin{align}
\label{3.11}
w(s)+w(t)>w(u) \quad (s, t, u\in V_G).
\end{align}

From this assumption, 
we can derive the following proposition.

\begin{prop}
\label{prop20}
Let $A$ be a popular arborescence 
and let $y_A^*$ satisfy (\ref{prop17eq}).
If the condition (\ref{3.11}) holds for any three vertices $s, t, u \in V_G$, 
then $|\{X\in \mathcal{F}(y_A^*) \mid v\in X\}|\leq 2$ holds for every vertex $v\in V_G$.
\end{prop}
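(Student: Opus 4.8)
The plan is to argue by contradiction: suppose some vertex $v\in V_G$ lies in at least three sets of $\mathcal{F}(y_A^*)$. Since $\mathcal{F}(y_A^*)$ is laminar (Lemma \ref{lemma16}), the sets containing $v$ form a chain, and I would focus on the three innermost ones $X_1\subsetneq X_2\subsetneq X_3$. The goal is to pin down the dual value $y_A^*(X_i)$ on each of these sets exactly, via Proposition \ref{prop18}, and then contradict the upper bound of Proposition \ref{prop19} using the weight assumption (\ref{3.11}).

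First I would identify the entry-points. The key observation is that $v$ is itself the entry-point of the innermost set $X_1$. Indeed, by Lemma \ref{lemma17} the entry-point of $X_1$ is the unique vertex of $X_1$ lying in no maximal proper subset of $X_1$ belonging to $\mathcal{F}(y_A^*)$; if $v$ were not this entry-point, then $v$ would lie in some set of $\mathcal{F}(y_A^*)$ properly contained in $X_1$, contradicting that $X_1$ is the smallest set of $\mathcal{F}(y_A^*)$ containing $v$. Hence $v$ is the entry-point of $X_1$, and Proposition \ref{prop18} gives $y_A^*(X_1)=w(v)$. By the same token, since $v\in X_1\subsetneq X_2$ and $v\in X_2\subsetneq X_3$, the vertex $v$ lies in a proper subset (belonging to $\mathcal{F}(y_A^*)$) of both $X_2$ and $X_3$, so $v$ is the entry-point of neither; writing $v_2$ and $v_3$ for their (distinct) entry-points, Proposition \ref{prop18} gives $y_A^*(X_2)=w(v_2)$ and $y_A^*(X_3)=w(v_3)$.

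Combining these identities with Proposition \ref{prop19} then yields
$$w(v)+w(v_2)+w(v_3)=y_A^*(X_1)+y_A^*(X_2)+y_A^*(X_3)\leq \sum_{X\colon v\in X}y_A^*(X)\leq 2w(v),$$
whence $w(v_2)+w(v_3)\leq w(v)$. This directly contradicts the assumption (\ref{3.11}) applied to $s=v_2$, $t=v_3$, $u=v$, which asserts $w(v_2)+w(v_3)>w(v)$. Therefore at most two sets of $\mathcal{F}(y_A^*)$ can contain $v$, as claimed.

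I expect the main obstacle to be the entry-point argument rather than the final arithmetic: one must verify that $v$ is forced to be the entry-point of the innermost set $X_1$, and is the entry-point of neither larger set, so that Proposition \ref{prop18} can be invoked to evaluate all three dual values exactly. Pinning down $y_A^*(X_1)=w(v)$ is precisely what makes the contradiction sharp — the naive bound $y_A^*(X_1)+y_A^*(X_2)+y_A^*(X_3)=w(v_1)+w(v_2)+w(v_3)$ for arbitrary entry-points $v_1,v_2,v_3$, together with three applications of (\ref{3.11}), would only give $w(v_1)+w(v_2)+w(v_3)>\tfrac{3}{2}w(v)$, which is too weak to contradict the bound $2w(v)$ of Proposition \ref{prop19}.
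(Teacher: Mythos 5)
Your proof is correct and takes essentially the same approach as the paper: both identify the sets containing $v$ (via laminarity and Proposition \ref{prop18}) as $Y_v$, with dual value exactly $w(v)$, plus two sets whose entry-points are distinct from $v$, and then derive a contradiction between the bound $\sum_{X\colon v\in X}y_A^*(X)\leq 2w(v)$ (Proposition \ref{prop19}, i.e.\ the constraint \eqref{lp2st1new} for the edge $(r,v)$) and assumption \eqref{3.11}. The only difference is presentational: you spell out the entry-point argument showing the innermost set is $Y_v$, which the paper leaves implicit.
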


\begin{proof}
Assume to the contrary that $|X\in \mathcal{F}(y^*_{A})\mid v\in X\}| \geq 3$ for some $v\in V_G$.
In this case, there exist two vertices $a,b\in V_G \setminus \{v\}$ such that $v \in Y_a\cap Y_b \cap Y_v$.
Since $y^*_{A}(Y_a)=w(a), y^*_{A}(Y_b)=w(b), y^*_{A}(Y_v)=w(v)$ from Proposition \ref{prop18} and $w(v) < w(a)+w(b)$ from assumption (\ref{3.11}), we have $c_A(r,v)\leq 2w(v)<y^*_{A}(Y_a)+y^*_{A}(Y_b)+y^*_{A}(Y_v)$, which contradicts the constraint (\ref{lp2st1new}) in (LP2). Therefore, under assumption (\ref{3.11}), $|X\in \mathcal{F}(y^*_{A})\mid v\in X\}| \leq 2$ holds for every $v\in V_G$. 
\end{proof}
From Propositions \ref{prop18} and \ref{prop20}, and the laminarity of $\mathcal{F}(y_A^*)$, the following corollary can be derived.
An example of this corollary is shown in Figure \ref{one-to-one}.

\begin{cor}
\label{cor21}
Under assumption (\ref{3.11}), for $y^*_{A}$ satisfying (\ref{prop17eq}) 
and $v\in V_G$ with $|Y_v|\geq 2$, 
it holds that $Y_u=\{u\}$ for each 
$u\in Y_v\setminus \{v\}$.
\end{cor}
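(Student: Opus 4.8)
The plan is to derive the claim purely from three facts already established: the depth bound of Proposition~\ref{prop20} (each vertex lies in at most two sets of $\mathcal{F}(y_A^*)$), the singleton property of minimal sets from Lemma~\ref{lemma15}, and the entry-point structure~\eqref{prop17eq} together with the one-to-one correspondence of Proposition~\ref{prop18}. Fix $v\in V_G$ with $|Y_v|\geq 2$, and recall that $\mathcal{F}(y_A^*)=\{Y_x:x\in V_G\}$ is laminar and that $v$ is the entry-point of $Y_v$.

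The first step I would record is the key consequence of Proposition~\ref{prop20}: the laminar family $\mathcal{F}(y_A^*)$ contains no chain of three distinct sets. Indeed, if $Z\subsetneq W\subsetneq Y$ were all members of $\mathcal{F}(y_A^*)$, then any vertex $z\in Z$ would satisfy $z\in Z\cap W\cap Y$, so that $|\{X\in\mathcal{F}(y_A^*) : z\in X\}|\geq 3$, contradicting Proposition~\ref{prop20}. Next, let $Y_{w_1},\ldots,Y_{w_k}$ be the maximal proper subsets of $Y_v$ in $\mathcal{F}(y_A^*)$. By~\eqref{prop17eq}, $Y_v\setminus(Y_{w_1}\cup\cdots\cup Y_{w_k})=\{v\}$, so every $u\in Y_v\setminus\{v\}$ lies in some $Y_{w_i}$. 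I then claim that each $Y_{w_i}$ is minimal in $\mathcal{F}(y_A^*)$: if not, some $Z\in\mathcal{F}(y_A^*)$ would satisfy $Z\subsetneq Y_{w_i}\subsetneq Y_v$, which is exactly a forbidden three-set chain. Hence, by Lemma~\ref{lemma15}, each $Y_{w_i}$ is a singleton $\{w_i\}$. For $u\in Y_v\setminus\{v\}$ we then have $u\in Y_{w_i}=\{w_i\}$, so $u=w_i$, and by the one-to-one correspondence of Proposition~\ref{prop18} we conclude $Y_u=Y_{w_i}=\{u\}$, as desired.

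The main obstacle is essentially bookkeeping rather than a conceptual difficulty: one must check that the maximal proper subsets $Y_{w_1},\ldots,Y_{w_k}$ really are members of $\mathcal{F}(y_A^*)$ (so that Lemma~\ref{lemma15} applies to them) and that they cover $Y_v\setminus\{v\}$ precisely as~\eqref{prop17eq} asserts. The one delicate point in the no-chain argument is to apply Proposition~\ref{prop20} to a vertex of the \emph{smallest} set in the putative chain, since that is the only place where all three sets simultaneously overlap; choosing a vertex higher up would not produce the required triple membership. Once the no-chain observation is in place, combining it with~\eqref{prop17eq} and Lemma~\ref{lemma15} yields the conclusion with no further computation.
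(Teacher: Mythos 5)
Your proof is correct and is essentially the derivation the paper intends: Corollary~\ref{cor21} is stated there without an explicit proof, as a consequence of Propositions~\ref{prop18} and~\ref{prop20} and laminarity, and that is exactly the machinery you use (your no-three-chain observation is an immediate consequence of Proposition~\ref{prop20}, and the covering of $Y_v\setminus\{v\}$ by the maximal proper subsets is \eqref{prop17eq} plus the entry-point correspondence). Your appeal to Lemma~\ref{lemma15} is sound but not strictly necessary, since a set of $\mathcal{F}(y_A^*)$ that is minimal already collapses to a singleton by \eqref{prop17eq} applied with $k=0$.
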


\begin{figure}
\begin{center}
\begin{tikzpicture}

\node (another) [fill, draw, circle, inner sep=4] at (9,8){};
\node (v) [fill, draw, circle, inner sep=4, label=right:$v$] at (6,6){};
\node (s) [fill, draw, circle, inner sep=4, label=right:$s$] at (4,4){};
\node (t) [fill, draw, circle, inner sep=4, label=right:$t$] at (6,3){};
\node (u) [fill, draw, circle, inner sep=4, label=right:$u$] at (8,4){};

\path[line width=2, ->,>=stealth] 
(another) edge[right] node{$A(v)$} (v)
(v) edge[left] node{$A(s)$} (s)
(v) edge[left] node{$A(t)$} (t)
(v) edge[left] node{$A(u)$} (u);

\draw [line width=1.5, blue] (s) circle [x radius=1, y radius=0.8];
\draw [line width=1.5, blue] (t) circle [x radius=1, y radius=0.8];
\draw [line width=1.5, blue] (u) circle [x radius=1, y radius=0.8];
\draw [line width=1.5, blue] (6,4) circle [x radius=3.5, y radius=2.5];

\node (Yv) [text=blue] at (5.8,6.8) {$Y_v$};
\node (Ys) [text=blue] at (4,2.9) {$Y_s$};
\node (Yt) [text=blue] at (7,2.2) {$Y_t$};
\node (Yu) [text=blue] at (8.8,3) {$Y_u$};
\end{tikzpicture}
\end{center}
\caption{An example of Corollary \ref{cor21}:
For each 
$u\in Y_v\setminus \{v\}$, it holds that $Y_u=\{u\}$.}
\label{one-to-one}
\end{figure}
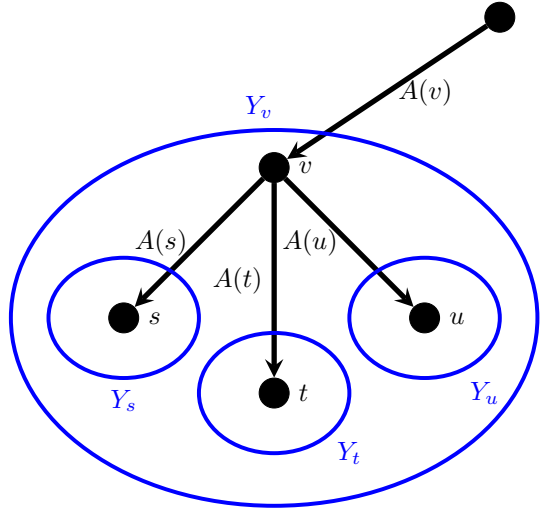

The safe edges used in our algorithm are defined in the same way as \cite{kavitha20}, described below.
For $X\subseteq V_G$, an edge $(u,v)\in E[X]$ satisfying the following two conditions is called a \emph{safe edge} in $X$, and the set of safe edges in $X$ is denoted by $S(X)$:
\begin{enumerate}
\item $(u,v)$ is not dominated by any edges $E[X]$, i.e., $(u,v)\succsim_v (u',v)$ for all $(u',v) \in E[X]$; and
\item $(u,v)$ dominates each $(t,v)$ with $t \notin X$, i.e., $(u,v)\succ_v (t,v)$ for all $(t,v)\in \delta^-(X)$.
\end{enumerate}
Recall that the preferences of each vertex are given by the total preorder.
Hence, if there exists $e \in \delta^-(v)\cap \delta^-(X)$ such that $e$ is one of the most preferred edges in $\delta^-(v)$, it holds that $S(X)\cap \delta^-(v)=\emptyset$.
Otherwise, $S(X)\cap \delta^-(v)$ is the set of the most preferred edges in $\delta^-(v)$.
 
The edges in a popular arborescence 
are basically chosen from safe edges, 
as shown in the next proposition. 

\begin{prop}
For any popular arborescence $A$ and $X\in\mathcal{F}(y_A^*)$ satisfying (\ref{prop17eq}), it holds that $A\cap E[X] \subseteq S(X)$.
\end{prop}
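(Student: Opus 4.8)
The plan is to verify the two defining conditions of a safe edge for an arbitrary $e=(u,v)\in A\cap E[X]$. First I would pin down the local structure of $\mathcal{F}(y_A^*)$ around $v$. Since $e\in A$ we have $e=A(v)$, and since both endpoints of $e$ lie in $X$ the edge $A(v)$ does not enter $X$; as the unique edge of $A$ entering $X$ is the one whose head is the entry-point $p$ of $X=Y_p$, this forces $v\neq p$. Because $u,v\in X$ we have $|X|\geq 2$, so Corollary~\ref{cor21} applies and yields $Y_v=\{v\}$. Combining this with Proposition~\ref{prop20}, the only members of $\mathcal{F}(y_A^*)$ containing $v$ are $X$ and $Y_v=\{v\}$, both with positive dual value; moreover $y_A^*(Y_v)=w(v)$ by Proposition~\ref{prop18}. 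This two-set description of $\{Z\in\mathcal{F}(y_A^*):v\in Z\}$ is the fact I would use repeatedly.

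For condition~1 I would argue by contradiction: suppose some $(u',v)\in E[X]$ dominates $A(v)$, i.e.\ $(u',v)\succ_v A(v)$. Then the cost definition (\ref{cost}) gives $c_A(u',v)=0$. But $u'\neq v$, so $(u',v)\in\delta^-(Y_v)$, whence $\sum_{Z:(u',v)\in\delta^-(Z)}y_A^*(Z)\geq y_A^*(Y_v)>0$, contradicting the dual-feasibility constraint (\ref{lp2st1new}). Hence $A(v)$ is dominated by no edge of $E[X]$.

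For condition~2 I would again argue by contradiction: suppose some $(t,v)\in\delta^-(X)$ with $t\notin X$ satisfies $(t,v)\succsim_v A(v)$ (the negation, under the total preorder, of $A(v)\succ_v(t,v)$). Since $t\notin X$ and $t\neq v$, the edge $(t,v)$ enters both $X$ and $Y_v$, so $\sum_{Z:(t,v)\in\delta^-(Z)}y_A^*(Z)=y_A^*(X)+y_A^*(Y_v)=y_A^*(X)+w(v)$. If $(t,v)\succ_v A(v)$ then $c_A(t,v)=0$ and (\ref{lp2st1new}) forces $y_A^*(X)+w(v)\leq0$; if instead $(t,v)\sim_v A(v)$ then $c_A(t,v)=w(v)$ and (\ref{lp2st1new}) forces $y_A^*(X)\leq0$. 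Either outcome contradicts $y_A^*(X)>0$ and the positivity of $w$, so $A(v)\succ_v(t,v)$ for every such $(t,v)$, and the proof is complete.

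The substantive step is the first paragraph: establishing that exactly two sets of $\mathcal{F}(y_A^*)$ contain $v$, namely $X$ and the singleton $\{v\}$. This is where the weight assumption (\ref{3.11}) enters, through Corollary~\ref{cor21} and Proposition~\ref{prop20}; without the resulting two-layer structure the dual sums in conditions~1 and~2 could acquire further positive terms and the tightness arguments would break. Once that structure is in hand, both conditions reduce to short applications of the cost definition (\ref{cost}) together with dual feasibility (\ref{lp2st1new}), so I expect no further obstacle.
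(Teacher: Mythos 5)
Your proof is correct and follows essentially the same route as the paper's: both reduce the two safe-edge conditions to violations of dual feasibility \eqref{lp2st1new}, using the fact that $v$ cannot be the entry-point of $X$ so that Corollary~\ref{cor21} gives $Y_v=\{v\}\in\mathcal{F}(y_A^*)$, and then applying the cost definition \eqref{cost}. The only cosmetic differences are that you invoke Proposition~\ref{prop20} to get the exact two-set description (only the lower bound on the dual sum is needed, since $y_A^*\geq 0$) and you use $y_A^*(X)>0$ directly where the paper identifies the entry-point $s$ and writes $y_A^*(X)=w(s)$.
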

\begin{proof}
Assume to the contrary that there exists an edge $(u,v)\in (A\cap E[X])\setminus S(X)$. 
It follows from $(u,v)\in A$ that $c_A(u,v)=w(v)$. 
Also, $(u,v) \notin S(X)$ 
implies that there exists an edge 
$(u',v) \in E[X]$
that dominates $(u,v)$ or an edge 
$(t,v)\in \delta^-(X)$
that is not dominated by $(u,v)$.

Suppose that there exists an edge $(u',v)\in E[X]$ 
dominating
$(u,v)$. 
In this case, $c_A(u',v)=0$.
Here $v\in X$ is not an entry-point of $X$, and thus $Y_v=\{v\} \in \mathcal{F}(y_A^*)$ follows from Corollary \ref{cor21}, which does not satisfy the constraint (\ref{lp2st1new})
for $(u',v)$.

Suppose that there exists an edge $(t,v)\in \delta^-(X)$ not dominated by $(u,v)$. 
Since $(u,v) \precsim_v (t,v)$, we have $c_A(t,v)\in \{0, w(v)\}$. 
Also, since $|A\cap \delta^-(X)|=1$ and $(u,v)\in A\cap E[X]$, there exist an edge in $A\cap \delta^-(X)$ whose terminal vertex $s\in X \setminus \{v\}$ is the entry-point for $X$. 
Then, by Proposition \ref{prop18}, $y^*_{A}(X)=w(s)$ holds. 
Now, $(t,v)$ enters the two sets 
$\{v\}$ and $X$ in $\mathcal{F}(y_A^*)$.
It thus follows that $y^*_{A}(X)+y^*_{A}(\{v\}) = w(s)+w(v) >w(v) \geq c_A(t,v)$, which contradicts the constraint (\ref{lp2st1new}) 
for $(t,v)$.
Therefore, we conclude that $A\cap E[X] \subseteq S(X)$ for any popular arborescence $A$ and $X\in\mathcal{F}(y_A^*)$.
\end{proof}

\section{Weighted popular branching algorithm}
\label{SECalgorithm}
We are now ready to describe our algorithm for finding a weighted popular arborescence 
and prove its validity. 
The algorithm is described in Algorithm 1.

\begin{algorithm}
\caption{Finding popular branching algorithm}
\begin{algorithmic}[1]
\FOR{each $v\in V_G$}
\STATE $X_v^0=V_G, i=0$
\WHILE{$v$ does not reach all vertices in the graph $D_v^i=(X_v^i,S(X_v^i))$}
\STATE $X_v^{i+1}=$ the set of vertices reachable from $v$ in $D_v^i$
\STATE $i=i+1$
\ENDWHILE
\STATE $X_v=X_v^i$, $D_v=D_v^i$
\ENDFOR
\STATE  $\mathcal{X}=\{X_v: v\in V_G\}$,
$\mathcal{X}'=\{X_v\in \mathcal{X}: \mbox{$X_v$ is maximal in $\mathcal{X}$}\}$, $E'=\emptyset$, and  
$D'=(\mathcal{X}'\cup \{r\}, E')$
\FOR{each $X_v\in \mathcal{X}'$}
\STATE let 
$\bar{X}_v$ be the strongly connected component of $D_v$ such that no edge in $S(X_v)$ enters.
    \IF{every $v' \in \bar{X}_v$ which has minimum weight in $\bar{X}_v$ satisfies the following condition
    
    \begin{quote}
There exist a vertex $s\in X_v\setminus \bar{X}_v$ and an edge $f\in(E[X_v]\setminus S(X_v))\cap\delta^-(v')$ such that\\
\vspace{-3pt}
 \begin{itemize}
     \item[1.] $w(s)<w(v')$,\\
     \vspace{-3pt}
     \item[2.] $v'$ is reachable from $s$ by the edges in $S(X_v)\cup \{f\}$,\\
     \vspace{-3pt}
     \item[3.] $f \succ_{v'} e$ holds for all $e\in \delta^-(v')\cap \delta^-(X_v)$.
 \end{itemize}
 \vspace{-3pt}
\end{quote}
}
    \RETURN{``No popular arborescence in $D$."}

    \ELSE 
        \FOR{every $v' \in \bar{X}_v$ which has minimum weight in $\bar{X}_v$}
            \IF{$e=(u,v')$ is not dominated by an edge in $\delta^-(X_v)$ for every $u\notin X_v$}
            \STATE define an edge $e'$ in $D'$
                by
                \begin{align}
                e'=
                \begin{cases}
                (U,X_v)  &(u\in U, U\in\mathcal{X}),\\
                (r,X_v) &(u=r),
                \end{cases}
                \end{align}
            \STATE $E' := E'\cup \{e'\}$
            \ENDIF
        \ENDFOR
    \ENDIF
\ENDFOR
\IF{$D'=(\mathcal{X}'\cup \{r\}, E')$ does not contain an $r$-arborescence $A'$}
\STATE go to Line 25.
\ELSE 
\STATE $\tilde{A}=\{e: e'\in A'\}$
\STATE $R=\{v\in V_G: |X_v| \geq 2, \delta^-(v)\cap\tilde{A}\neq \emptyset\}$
\FOR{each $v\in R$}
\STATE let $A_v$ be an $v$-arborescence in $(X_v,S(X_v))$
\ENDFOR
\RETURN $A^*=\tilde{A}\cup \bigcup_{v\in R} A_v$
\ENDIF
\RETURN{``No popular arborescence in $D$.''}
\end{algorithmic}
\end{algorithm}

A major difference from the algorithm without the vertex-weights \cite{kavitha20} appears in Line 10. 
If the condition shown in Line 10 is satisfied,  
there exists no popular arborescence  (see Lemma \ref{lemma28}).

The running time of the algorithm is $O(mn^2)$, which is the same as the algorithm without the vertex-weights \cite{kavitha20}.
Lines 1--6 take $O(mn^2)$ time and it is the bottleneck.
Lines 10--11 and 12--16 can be executed in $O(m)$ time, since 
they can be implemented to search each edge
at most once.

We now prove the validity of the algorithm described above by showing that 
\begin{itemize}
    \item if the algorithm returns an edge set $A^*$, then 
    $A^*$ is a popular arborescence in $D$ (Theorem \ref{theorem23}), and
    \item if $D$ admits a popular arborescence, then the algorithm returns an edge set $A^*$ (Theorem \ref{theorem19}).
\end{itemize}

The following lemma in \cite{kavitha20}
 is useful in our proof as well.
\begin{lemma}[\cite{kavitha20}]
\label{lemma7}
For each $v\in V_G$, let $X_v$ be the set defined in Lines 1--6 in Algorithm 1. 
Then, $\mathcal{X}$ is laminar, 
and $u\in X_v$ implies $X_u \subseteq X_v$.
\end{lemma}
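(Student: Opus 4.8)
The plan is to isolate two structural facts about the sets $X_v$ produced in Lines 1--6 and then read both conclusions off them. Throughout, call a set $X$ with $v\in X$ \emph{$v$-closed} if $v$ reaches every vertex of $X$ using only edges of $S(X)$; by the stopping rule of the while-loop each output $X_v$ is exactly a $v$-closed set, since the loop halts precisely when $v$ reaches all of $X_v^i$ in $(X_v^i,S(X_v^i))$.

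The first fact I would prove is a monotonicity property of safe edges: if $Z\subseteq X$ then $S(Z)\subseteq S(X)$. Take $(a,b)\in S(Z)$. For condition 1 in $X$, an edge $(c,b)\in E[X]$ with $c\in Z$ is controlled by condition 1 in $Z$, while one with $c\in X\setminus Z$ satisfies $(a,b)\succ_b(c,b)$ by condition 2 in $Z$ (as $c\notin Z$); so $(a,b)$ is dominated by nothing in $E[X]$. Condition 2 in $X$ is immediate since $t\notin X$ implies $t\notin Z$. I expect this to be the main obstacle to getting the argument off the ground: the delicate point is that it is condition 2 of the \emph{smaller} set that tames the new in-edges appearing in the \emph{larger} set, so the correct monotonicity runs from small to large, and the naive small-to-large inclusion of condition 1 (which fails) must be routed through condition 2 instead.

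Using monotonicity I would then show that $X_v$ is the \emph{maximal} $v$-closed set. For any $v$-closed $Z$, I argue by induction that $Z\subseteq X_v^i$ for all $i$: given $Z\subseteq X_v^i$, a safe-in-$Z$ path from $v$ to $z\in Z$ is, by $S(Z)\subseteq S(X_v^i)$, also a path in $(X_v^i,S(X_v^i))$, so $z\in X_v^{i+1}$; the base case is $Z\subseteq V_G=X_v^0$. Hence $Z\subseteq X_v$. The implication $u\in X_v\Rightarrow X_u\subseteq X_v$ then falls out: put $W=X_u\cup X_v$. Through $S(X_v)\subseteq S(W)$ the root $v$ reaches all of $X_v$, in particular $u$, and through $S(X_u)\subseteq S(W)$ the vertex $u$ reaches all of $X_u$; concatenating, $v$ reaches all of $W$ via $S(W)$, so $W$ is $v$-closed and maximality gives $W\subseteq X_v$, i.e.\ $X_u\subseteq X_v$.

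For laminarity it suffices to show that $X_u\cap X_v\neq\emptyset$ forces $u\in X_v$ or $v\in X_u$, since the containment statement then nests the sets. Suppose to the contrary that $u\notin X_v$, $v\notin X_u$, yet $Y:=X_u\cap X_v\neq\emptyset$. Among the vertices of $Y$, let $z$ be one reachable from $v$ by a shortest safe path in $(X_v,S(X_v))$, with last edge $(p,z)$; minimality forces $p\notin Y$, and since $p\in X_v$ this gives $p\notin X_u$. As $z\in Y\subseteq X_u$ and $z\neq u$, the vertex $z$ has an incoming safe edge $(a,z)\in S(X_u)$ with $a\in X_u$; condition 2 of $(a,z)$ in $S(X_u)$ together with $p\notin X_u$ yields $(a,z)\succ_z(p,z)$. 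But $(p,z)\in S(X_v)$ gives $(p,z)\succsim_z(a,z)$ when $a\in X_v$ (condition 1) and even $(p,z)\succ_z(a,z)$ when $a\notin X_v$ (condition 2), both contradicting $(a,z)\succ_z(p,z)$. This contradiction proves the claim, and hence laminarity. The clean point that makes this work is the choice of the entry vertex $z$ whose predecessor lies outside $X_u$: it pits the two safe-edge conditions against each other with no appeal to ties, so the argument transfers unchanged to total preorders.
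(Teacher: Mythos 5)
Your proof is correct, but there is an important caveat about what you are being compared against: the paper does not prove this lemma at all --- it states it as a citation to \cite{kavitha20} and defers entirely to that reference. So your write-up can only be judged on its own merits, and on those merits it holds up. The monotonicity claim $Z\subseteq X \Rightarrow S(Z)\subseteq S(X)$ is the right key lemma, and your case split is exactly the correct mechanism: competitors $(c,b)$ with $c\in Z$ are handled by condition~1 in $Z$, and competitors with $c\in X\setminus Z$ by condition~2 in $Z$, which gives the strict relation $\succ$ and a fortiori the required $\succsim$. The induction showing that every $v$-closed set survives every iterate $X_v^i$ (hence $X_v$ is the unique maximal $v$-closed set), the union argument $W=X_u\cup X_v$ for $u\in X_v\Rightarrow X_u\subseteq X_v$, and the shortest-path entry-vertex contradiction for laminarity are all sound. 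Two small facts are used silently and deserve a sentence each: $v\notin X_u$ forces $v\notin Y$, so the shortest path from $v$ to $z$ is nonempty and the edge $(p,z)$ exists; and $u\notin X_v$ forces $z\neq u$, so the path from $u$ to $z$ inside $X_u$ is nonempty and the edge $(a,z)$ exists. Both follow immediately from your standing assumptions, so this is a presentational point, not a gap. A genuine merit of your argument over the bare citation: \cite{kavitha20} proves this lemma for strict partial orders, whereas the present paper works with total preorders and a correspondingly rephrased condition~1 ($\succsim$ in place of ``not dominated''); your proof only ever uses that $\succ$ is asymmetric and that $\succsim$ in one direction contradicts $\succ$ in the other, so it is valid under both readings and thereby closes the small logical gap the paper leaves open by importing a result proved in a different preference model.
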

 
\begin{theorem}
\label{theorem23}
If the algorithm returns an edge set $A^*$, then $A^*$ is a popular arborescence. 
\end{theorem}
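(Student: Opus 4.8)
The plan is to certify popularity of the returned $A^*$ through linear-programming duality rather than by a direct exchange argument. Concretely, I would exhibit a feasible solution $y$ to (LP2) with respect to the costs $c_{A^*}$ whose objective value equals $w(V_G)$; since $A^*$ is a primal-feasible $r$-arborescence with $c_{A^*}(A^*)=w(V_G)$, matching a feasible dual of the same value certifies by weak LP duality that $A^*$ is a min-cost $r$-arborescence, and then Proposition \ref{prop12} gives popularity immediately. Before constructing the dual I would first confirm that $A^*$ really is an $r$-arborescence: the algorithm reaches a return of an edge set only when $D'$ contains an $r$-arborescence $A'$, so un-contracting $A'$ into $\tilde A$ makes each maximal set $X_v\in\mathcal X'$ entered exactly once, at the entry-point $v'$ (a minimum-weight vertex of $\bar X_v$) where the chosen edge of $A'$ lands, and filling each such $X_v$ with $|X_v|\ge 2$ by the $v'$-arborescence $A_v\subseteq S(X_v)$ supplies exactly one incoming edge to every interior vertex. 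Laminarity of $\mathcal X$ (Lemma \ref{lemma7}) guarantees these fillings nest consistently, so $A^*$ has one incoming edge per vertex of $V_G$, is acyclic, and is $r$-reachable.

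For the dual I would follow the structure of a popular dual solution dictated by Proposition \ref{prop18} and Corollary \ref{cor21}: for each maximal $X\in\mathcal X'$ with $|X|\ge 2$ set $y(X)=w(p_X)$, where $p_X$ is the entry-point at which $\tilde A$ enters $X$, and for every vertex $u$ that is not the entry-point of such a set put $y(\{u\})=w(u)$. Each vertex of $V_G$ is then accounted for exactly once — entry-points through their big sets, all others through their singletons — so $\sum_X y(X)=w(V_G)$, as needed. By Lemma \ref{lemma7} the maximal sets are pairwise disjoint, so every vertex $b$ lies in at most its own singleton and at most one big set; assumption (\ref{3.11}), through Proposition \ref{prop20}, is precisely what forces this two-layer structure, bounding $\sum_{X\ni b}y(X)$ by two terms.

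The heart of the verification is the dual constraint $\sum_{X:\,e\in\delta^-(X)}y(X)\le c_{A^*}(e)$ for each $e=(a,b)\in E$, checked by the position of $b$. If $b=p_X$ is an entry-point, only $X$ contains $b$ and an external $e$ contributes $w(b)$, which is at most $c_{A^*}(e)$ because the edge chosen for $X$ is, by the inner selection rule, not dominated by any edge of $\delta^-(X_v)$, i.e.\ $A^*(b)\succsim_b e$. If $b$ is a singleton outside every big set, $A^*(b)$ is a most-preferred incoming edge and $c_{A^*}(e)\ge w(b)$ for all $e$. If $b$ is interior to a big set $X$ and $e$ is internal, the safe edge $A^*(b)\in S(X)$ is most preferred within $E[X]$, again giving $c_{A^*}(e)\ge w(b)$. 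The delicate case is an external edge into an interior $b$: then $e\in\delta^-(\{b\})\cap\delta^-(X)$, so the constraint reads $w(b)+w(p_X)\le c_{A^*}(e)$; since $A^*(b)$ is safe it dominates $e$, whence $c_{A^*}(e)=2w(b)$ and the inequality reduces to $w(p_X)\le w(b)$. The root edges $(r,b)$ reduce to the same inequalities.

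The principal obstacle is exactly this last inequality $w(p_X)\le w(b)$ at interior vertices lying outside the source component $\bar X$: the entry-point $p_X$ is only guaranteed minimum weight within $\bar X$, not within all of $X$, so a lighter downstream interior vertex carrying a dominated external edge would destroy dual feasibility. The crux is therefore to show that the algorithm returns $A^*$ only when no such vertex obstructs the chosen entry-points, and this is where the termination test in Line 10 must be tied to the dual constraints: since the algorithm did not return ``No popular arborescence,'' some minimum-weight vertex of $\bar X$ admits no witness $(s,f)$, and I expect the argument to show that a lighter interior vertex with a dominated external edge, reachable from $\bar X$ through safe edges, would manufacture precisely such a witness, contradicting that the algorithm proceeded. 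Matching the combinatorial Line 10 condition to these feasibility inequalities — and confirming that $A'$ may be taken to enter each big set at a witness-free minimum-weight vertex — is where the real work lies; everything else is the bookkeeping sketched above.
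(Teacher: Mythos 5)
Your overall strategy---certifying popularity by exhibiting a feasible dual $y$ of value $w(V_G)$ and invoking Proposition \ref{prop12}/Proposition \ref{lemma14}---is exactly the paper's strategy, and your arborescence check and the accounting $\sum_X y(X)=w(V_G)$ are fine. The genuine gap is in your choice of dual support. You place the value $w(p_X)$ on the \emph{entire} maximal set $X\in\mathcal{X}'$, which (as you correctly compute) forces $w(p_X)\leq w(b)$ for \emph{every} interior vertex $b\in X\setminus\{p_X\}$, since the root edge $(r,b)$ enters both $\{b\}$ and $X$ and has cost exactly $2w(b)$. You then hope that the failure of the Line 10 test rules out any interior $b$ with $w(b)<w(p_X)$. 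This hope is false, because the reachability in the Line 10 witness goes the opposite way from what you need: condition 2 requires the light vertex $s$ to \emph{reach} $v'$ via $S(X_v)\cup\{f\}$, where $f$ enters $v'$; a light interior vertex that is reachable \emph{from} $\bar{X}$ but cannot reach back to the entry-point produces no witness, so the algorithm (correctly) proceeds, yet your dual is infeasible. Concretely: take $V_G=\{a,b,c\}$ with $w(a)=w(c)=3$, $w(b)=2$ (so \eqref{3.11} holds), $E_G=\{(a,c),(c,a),(a,b)\}$, each real edge preferred to the root edge. Then $X=V_G$ is the unique maximal set, $S(X)=E_G$, $\bar{X}=\{a,c\}$, no vertex of $\bar{X}$ admits a Line 10 witness (there are no non-safe edges in $E[X]$ at all), and the algorithm returns, say, $A^*=\{(r,c),(c,a),(a,b)\}$, which is indeed popular. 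Your dual sets $y(V_G)=w(c)=3$ and $y(\{b\})=2$, and the edge $(r,b)$ violates \eqref{lp2st1new}: $3+2=5>4=2w(b)=c_{A^*}(r,b)$. So the dual solution you propose simply does not exist in general, even when the theorem's conclusion is true; the obstacle you flagged is not a missing lemma but a defect of the construction itself.

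The paper's proof avoids this by shrinking the big set: instead of $X$ it assigns the value $w(v_X)$ to $Y_{v_X}$, the strongly connected component containing $v_X$ of the subgraph of $X$ induced by $S(X)\cup\{e\in\delta^-(v_X)\colon e\succ_{v_X}A^*(v_X)\}$, and every vertex of $X\setminus Y_{v_X}$ appears only in its singleton. With this support, an interior vertex $u\in X\setminus Y_{v_X}$ imposes only the constraint $w(u)\leq c_{A^*}(e)$, which follows from the safety of $A^*(u)$; and for $t\in Y_{v_X}\setminus\bar{X}$ the needed inequality $w(v_X)\leq w(t)$ is precisely what the failure of Line 10 delivers, because membership of $t$ in $Y_{v_X}$ supplies both the path from $t$ to $v_X$ (condition 2) and the dominating edge into $v_X$ (condition 3), so condition 1 must fail. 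In the example above, $Y_{v_X}=\{a,c\}$ and $b$ lies only in $\{b\}$, so the paper's dual is feasible. Your proof would need this redefinition of the support sets and the correspondingly restructured case analysis (edges into $v_X$, into $Y_{v_X}\setminus\{v_X\}$ split by $\bar{X}$ membership, and into $X\setminus Y_{v_X}$); as written, the construction cannot be repaired by sharpening the Line 10 argument.
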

\begin{proof}
First, we can show that $A^*$ is an $r$-arborescence in $D$ in the same manner as \cite{kavitha20}.

Next, we show that $A^*$ is a popular arborescence. 
For $X\in \mathcal{X}'$ such that $|X|\geq2$, let $v_{X} \in R$ be the terminal vertex of the edge in $A^* \cap \delta^-(X)$. 
Let $Y_{v_{X}}\subseteq X$ be a strongly connected component of the subgraph induced by 
$S(X)\cup \{e\in \delta^-(v_{X}) \colon e\succ_{v_{X}} A^*(v_{X})\}$
that contains $v_{X}$. 
For a vertex $t\in V_G$ such that $t\in X\setminus \{v_{X}\}$ for some $X\in \mathcal{X}$ with $|X|\geq 2$ or $\{t\}\in \mathcal{X}'$, let $Y_t = \{t\}$. 
Here, based on Proposition \ref{prop18}, we define
\begin{align}
y(Y)=
\begin{cases}
\label{y_prop7}
w(v) &(Y=Y_v \ \mathrm{for\ some}\  v \in V_G)\\
0 &(\mathrm{otherwise}).
\end{cases}
\end{align}
It is clear that $\sum_{Y\subseteq V_G}y(Y)=w(V_G)$. 
By Proposition \ref{lemma14}, the proof completes by showing that $y$ is a feasible solution to (LP2) determined by $A^*$.

We show that $y$ satisfies the constriant 
\eqref{lp2st1new}
of (LP2) for all edges.
First, we consider the edges in $\delta^-(v_{X})$ for each $X\in \mathcal{X}'$ with $|X|\geq 2$. The edges in $\delta^-(v_{X})\cap E[Y_{v_{X}}]$ do not enter any set in $\mathcal{F}(y)$.
For $e'\in \delta^-(v_X)\cap \delta^-(X)$, since $A^*(v_X)$ is not dominated by $e'$ from Lines 14--15 in Algorithm 1, it follows that $c_A(e')\in \{w(v_X), 2w(v_X)\}$ holds.
Since $Y_{v_{X}}$ is the only set in $\mathcal{F}(y)$ that $e'$ enters and  $y(Y_{v_{X}})=w(v_{X})$,
$y$ satisfies the constraint (\ref{lp2st1new}) for $e'$ in (LP2).
Consider an edge $f' \in \delta^-(v_{X})\cap \delta^-(Y_{v_{X}})\cap E[X]$.
By construction of $Y_{v_{X}}$, it must hold that $A^*(v_{X})\succsim_{v_X} f'$, and hence $c_A(f')\in \{w(v_{X}), 2w(v_{X})\}$. 
Since  $Y_{v_{X}}$ is the only set in $\mathcal{F}(y)$ that $f'$ enters,
it follows that $y$ satisfies the constraint (\ref{lp2st1new}) for $f'$ in (LP2).

Next, for $t\in Y_{v_X}\setminus \{v_X\}$, consider the edges in $\delta^-(t)$.
By our algorithm, $A^*(t)\in S(X)$. 
By the definition of safe edges, for $g\in \delta^-(t)\cap E[Y_{v_{X}}]$, it holds that $g\precsim_t A^*(t)$, and hence $c_A(g)=\{w(t), 2w(t)\}$. 
Since  $Y_{t}$ is the only set in $\mathcal{F}(y)$ that $g$ enters
and $y(Y_t)=w(t)$, $y$ satisfies the constraint (\ref{lp2st1new}) for $g$ in (LP2).

Then, consider an edge $g' \in \delta^-(t) \cap \delta^-(Y_{v_{X}})$.
Let $g'=(t_0,t)$. 
 If $t_0 \notin X$, then $A^*(t) \succ_t g'$ holds by the definition of $S(X)$. 
 If $t_0 \in X$, then $g'\notin S(X)$ holds by the definition of $Y_{v_{X}}$. 
 Since 
 $A^*(t) \in S(X)$ and 
 the preferences are given by a total preorder, 
 it follows that 
 \begin{align}
 \label{eq14}
 A^*(t)\succ_t g'
 \end{align}
 for any $g'\in \delta^-(t) \cap \delta^-(Y_{v_{X}})$, and thus $c_{A^*}(g')=2w(t)$.
An illustration is shown in Figure \ref{tproof}.

If $t\in \bar{X}$, then $w(v_{X}) \leq w(t)$ follows from the fact that $v_X$ is minimum weight in $\bar{X}$ by Line 13 of the algorithm.
Suppose that $t\in Y_{v_{X}} \setminus \bar{X}$. 
Since $t\notin \bar{X}$, $v_{X}$ is unreachable by safe edges from $t$. 
Furthermore, by construction of $Y_{v_X}$, there exists a path $P$ from $t$ to $v_X$ 
consisting of safe edges and the edges in $\delta^-(v_X)$ preferred to $A^*(v_X)$. 
In this path $P$, let $(t',v_X)$ be the edge in $\delta^-(v_X)$.
From our algorithm, $v_{X}$ is a vertex which does not satisfy at least one condition 1, 2, or 3 in Line 10, and $v_X$ satisfies the condition 2 from $(t',s)\succ_{v_X} A^*(v_{X})$.
Also, since the path $P\subseteq S(X)\cap (t',s)$, the condition 3 holds for $v_X$ and $(t',v_X)$.
Thus, it follows that $v_{X}$ does not satisfy the condition 1, and hence $w(v_{X})\leq w(t)$. From the above, $c_{A^*}(g')=2w(t) \geq w(v_{X})+w(t)=y(Y_{v_{X}})+y(Y_t)$ holds. Thus, $y$ satisfies the constraint (\ref{lp2st1new}) for $g'$ in (LP2).

Lastly, for $u\in X \setminus Y_{v_{X}}$, consider the edges in $\delta^-(u)$. By our algorithm, $A^*(u)\in S(X)$.
For an arbitrary edge $h\in \delta^-(u)$, $A^*(u)$ is not dominated by $h$ by the definition of safe edges, and therefore $c_{A^*}(h)\in \{w(u), 2w(u)\}$. 
Since  $Y_{u}$ is the only set in $\mathcal{F}(y)$ that $h$ enters
and $y(Y_u)=w(u)$, it follows that $y$ satisfies the constraint (\ref{lp2st1new}) for $h$ in (LP2).
 
Therefore we have proved that $y$ defined by (\ref{y_prop7}) satisfies the constraints in (LP2) for all edges.
\end{proof}

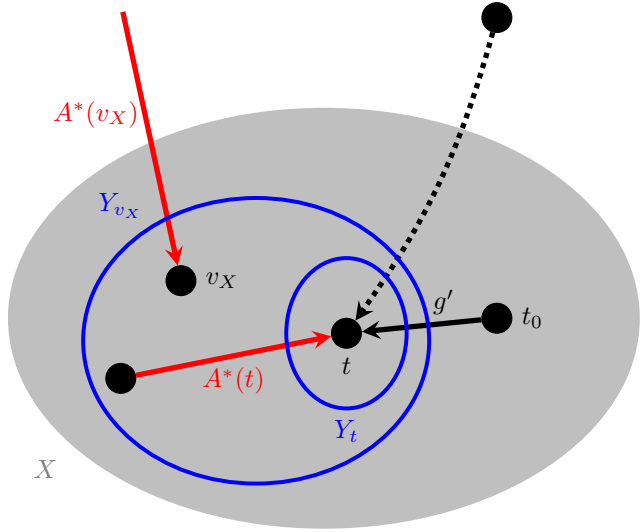
\begin{figure}
\begin{center}
\begin{tikzpicture}
\fill [lightgray] (5.7,3) circle [x radius=4.2, y radius=2.8];

\node (another1) [fill, draw, circle, inner sep=4] at (3,2.2){};
\node (another2) [fill, draw, circle, inner sep=4] at (8,7){};
\node (another3) at (3,7.2){};
\node (t0) [fill, draw, circle, inner sep=4, label=right:$t_0$] at (8,3){};
\node (t) [fill, draw, circle, inner sep=4, label=below:$t$] at (6,2.8){};
\node (vX) [fill, draw, circle, inner sep=4, label=right:$v_X$] at (3.8,3.5){};

\path[line width=2, ->,>=stealth] 
(another1) edge[red, below] node{$A^*(t)$} (t)
(another3) edge[red,above left] node{$A^*(v_X)$} (vX)
(t0) edge[above right] node{$g'$} (t);

\path[line width=2, ->,>=stealth, dotted] 
(another2) edge[left, bend left=10] node{} (t);

\draw [line width=1.5, blue] (t) circle [x radius=0.8, y radius=1];
\draw [line width=1.5, blue] (4.8,2.7) circle [x radius=2.3, y radius=1.9];

\node (Yt) [text=blue] at (6,1.5) {$Y_t$};
\node (Yv) [text=blue] at (3,4.5) {$Y_{v_X}$};
\node (X) [text=gray] at (2,1) {$X$};
\end{tikzpicture}
\end{center}
\caption{The edge $g'=(t_0, t)$ enters the two sets $Y_{v_{X}}$ and $Y_t$, and $A^*(t) \succ_t g'$ holds since $A^*(t)\in S(X)$.}
\label{tproof}
\end{figure}

We remark that Theorem \ref{theorem23} does not hold if the preferences are given by a partial order. 
In the case of partial orders, 
\eqref{eq14} cannot be derived 
from $A^*(t) \in S(X)$ and $g' \notin S(X)$, because it allows for incomparable edges.
This is the reason why we assume throughout the paper that the preferences are given by a total preorder.

Next, we prove that when a directed graph $D$ has a popular arborescence, the algorithm always finds one of them. Lemmas needed for the proof are given below.

Lemma \ref{lemma24} is shown similarly as Lemma 17 in \cite{kavitha20}, while our proof involves the vertex weights.
\begin{lemma}
\label{lemma24}
Let $A$ be a popular arborescence, 
let $y^*_{A}=\{Y_v \mid v\in V\}$ be the dual optimal solution determined by $A$ satisfying (\ref{prop17eq}), 
and let $\mathcal{X}' = \{X _v \colon v \in V_G\}$ be the family of sets defined in Lines 1--7 in Algorithm 1.
Then, $Y_v\subseteq X_v$ for each $v \in V_G$.
\end{lemma}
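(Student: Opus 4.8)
The plan is to induct on the iterations of Lines 1--6 of Algorithm 1 and show that the inclusion $Y_v\subseteq X_v^i$ is preserved at every step. The base case is immediate, since $X_v^0=V_G\supseteq Y_v$, and as $X_v=X_v^i$ for the final index $i$, the induction yields $Y_v\subseteq X_v$. The case $Y_v=\{v\}$ needs no work because $v\in X_v$ always, so I concentrate on $|Y_v|\geq 2$. The heart of the inductive step is the single claim that, whenever $Y_v\subseteq X$, every $A$-edge lying inside $Y_v$ remains safe in $X$, i.e.
\begin{align*}
A\cap E[Y_v]\subseteq S(X)\qquad (Y_v\subseteq X\subseteq V_G).
\end{align*}
Granting this, the step is routine: the edges $A\cap E[Y_v]$ form a spanning arborescence of $Y_v$ rooted at the entry-point $v$, because $|A\cap\delta^-(Y_v)|=1$ enters at $v$ by Proposition \ref{lemma14}(iii), so every other vertex of $Y_v$ receives its $A$-edge from inside $Y_v$. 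Hence in $D_v^i=(X_v^i,S(X_v^i))$ the vertex $v$ reaches all of $Y_v$, giving $Y_v\subseteq X_v^{i+1}$.

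To establish the claim I would first record two inputs. The proposition that $A\cap E[X]\subseteq S(X)$ for every $X\in\mathcal{F}(y_A^*)$ gives $A\cap E[Y_v]\subseteq S(Y_v)$, so each such edge already satisfies safety condition 1 relative to $E[Y_v]$. Next, every edge of $A\cap E[Y_v]$ enters some $t\in Y_v\setminus\{v\}$, and by Corollary \ref{cor21} we have $Y_t=\{t\}$, with $y_A^*(\{t\})=w(t)$ and $y_A^*(Y_v)=w(v)$ by Proposition \ref{prop18}.

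The weighted core of the argument is then to control the edges entering $t$ from outside $Y_v$. For any edge $(u',t)$ with $u'\notin Y_v$, this edge lies in both $\delta^-(\{t\})$ and $\delta^-(Y_v)$, so the dual constraint (\ref{lp2st1new}) forces
\begin{align*}
c_A(u',t)\geq y_A^*(\{t\})+y_A^*(Y_v)=w(t)+w(v)>w(t),
\end{align*}
whence $c_A(u',t)=2w(t)$ and therefore $A(t)\succ_t(u',t)$ by the definition \eqref{cost} of $c_A$. Thus the $A$-edge entering $t$ strictly dominates every edge entering $t$ from outside $Y_v$. Combined with safety in $Y_v$, this gives safety in any $X\supseteq Y_v$: condition 1 holds because no edge of $E[X]$ entering $t$ dominates $A(t)$ (those from within $Y_v$ by safety in $Y_v$, those from $X\setminus Y_v$ by strict domination), and condition 2 holds because any $(s,t)$ with $s\notin X$ has $s\notin Y_v$ and is therefore strictly dominated by $A(t)$. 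This proves the claim, and the induction closes.

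I expect the main obstacle to be precisely this weighted step. In the unweighted setting of \cite{kavitha20} the corresponding domination is free from Proposition \ref{prop19}, but here one must invoke Corollary \ref{cor21}, which itself rests on the weight assumption (\ref{3.11}), to guarantee $Y_t=\{t\}$, and then read off the strict domination $A(t)\succ_t(u',t)$ from the fact that $w(t)+w(v)$ already exceeds $w(t)$. As in Theorem \ref{theorem23}, the use of a total preorder is also essential, so that ``not strictly dominated'' and ``dominates'' behave as required and no incomparabilities intervene.
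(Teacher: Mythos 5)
Your proof is correct and takes essentially the same approach as the paper's: both show that the peeling in Lines 1--6 can never remove a vertex of $Y_v$, because the edges of $A[Y_v]$ (which span $Y_v$ from its entry-point $v$ since $|A\cap\delta^-(Y_v)|=1$) remain safe in every superset of $Y_v$, which is established via Corollary \ref{cor21} (so $Y_t=\{t\}$), Proposition \ref{prop18}, and the dual constraint (\ref{lp2st1new}) applied to the pair $Y_t$, $Y_v$ --- exactly the paper's two-case analysis. The difference is purely presentational: the paper argues by contradiction at the largest index $i^*$ with $Y_v\subseteq X_v^{i^*}$ and re-derives the safety violations inline, whereas you run a direct induction and reuse the proposition $A\cap E[X]\subseteq S(X)$ for the within-$Y_v$ comparisons.
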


\begin{proof}
If $Y_v=\{v\}$, $Y_v\subseteq X_v$ is trivial. Consider the case where $|Y_v|\geq 2$. 

From Lemma \ref{lemma17} and Corollary \ref{cor21}, for $s\in Y_v \setminus \{v\}$, it follows that $Y_s = \{s\}$ and $y(\{s\})=w(s)$.
Furthermore, for the initial vertex $u$ of $A(s)$, we have that $c_A(u,s)=w(s)$, which implies $u\in Y_v$.

We assume to the contrary that $Y_v\setminus X_v\neq \emptyset$.
Let $i^*$ be the largest index satisfying $Y_v \subseteq X_v^{i^*}$ in Lines 3--5 of the algorithm for $v$.
Then, there exists a vertex in $Y_v\setminus X_v^{i^{*}+1}$, which cannot be reached from $v$ by the edges in $S(X_v^{i^*})$.
Meanwhile, any $v'\in Y_v\setminus \{v\}$ is reachable from $v$ by the edges in $A[Y_v]$.
Therefore, the vertex in $Y_v \setminus X_v^{i^{*}+1}$can be reached by an edge in $A$ from $v$, and $A$ must contain at least one edge in $(\delta^-(Y_v\setminus X_v^{i^{*} + 1})\cap \delta^+(X_v^{i^{*}+1}))\setminus S(X_v^{i^{*}}) $. 
Denote this edge by $(u,t)\in A$. 
Note that $Y_t = \{t\} \in \mathcal{F}(y^*_{A})$ follows from 
Corollary \ref{cor21}.
By construction of the sets $X_v^{i^*}$ and $X_v^{i^{*}+1}$, either one of the following two holds.
\begin{enumerate}
\item There exists an edge $(x_1,t)\in E[X_v^{i^*}]$ that dominates the edge $(u,t)$.
In this case, 
we have that $c_A(u, t)=w(t)$ and hence $c_A(x_1,t)=0$. 
Since we have 
$\{t\} \in \mathcal{F}(y^*_{A})$, this
violates the constraint (\ref{lp2st1new}) in (LP2).

\item There exists an edge $(x_2,t)\in \delta^-(X_v^{i^*})$ that is not dominated by the edge $(u,t)$.
Then, $c_A(x_2,t)\in \{0, w(t)\}$.
It follows from  $(x_2,t)\in \delta^-(X_v^{i^*})$ that
$(x_2,t)$ enter two sets $Y_v, Y_t\in \mathcal{F}(y^*_A)$. Now, since $y(Y_v)=w(v)$ and $y(Y_t)=w(t)$, it follows that $y(Y_v)+y(Y_t) > c_A(x_2,t)$. This 
violates
the constraint (\ref{lp2st1new}) in (LP2).
\end{enumerate}
From the above, 
we 
have shown
that $Y_v\subseteq X_v$ for any $v$.
\end{proof}

\begin{lemma}
\label{lemma25}
Let A be a populer arborescence in $D$ and let $X\in \mathcal{X}'$. Then, $A\cap \delta^-(X)$ contains only one edge, and $X=X_v$ holds for the terminal vertex $v$ of that edge. 
\end{lemma}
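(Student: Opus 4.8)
The statement has two parts---exactly one edge of $A$ enters $X$, and its head $v$ satisfies $X=X_v$---and my plan reduces both to a single claim about the dual support. Since $A$ is an $r$-arborescence and $r\notin X$, we have $A\cap\delta^-(X)\neq\emptyset$. Writing $X=X_{v_0}$, recall that by the termination of the while-loop every vertex of $X$ is reachable from $v_0$ using only edges of $S(X)$. The heart of the argument is the following claim: for every head $v'$ of an edge in $A\cap\delta^-(X)$, the defining vertex $v_0$ lies in the dual set $Y_{v'}$.

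To prove the claim I would take a safe-edge path from $v_0$ to $v'$ and show it never crosses into $Y_{v'}$ from outside, so that $v_0$ already lies in $Y_{v'}$. If it did cross, the path would use a safe edge $(b,c)\in S(X)$ with $b\in X\setminus Y_{v'}$ and $c\in Y_{v'}$, and I would contradict the dual constraint \eqref{lp2st1new}. When $c=v'$, condition~2 of safeness together with the fact that $A(v')$ enters $v'$ from outside $X$ gives $(b,v')\succ_{v'}A(v')$, so $c_A(b,v')=0$; but $(b,v')$ enters $Y_{v'}$ and $y_A^*(Y_{v'})=w(v')>0$ by Proposition~\ref{prop18}, a contradiction. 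When $c\in Y_{v'}\setminus\{v'\}$, Corollary~\ref{cor21} yields $Y_c=\{c\}$ with $y_A^*(Y_c)=w(c)$, while the parent of $c$ in the subtree $A[Y_{v'}]$ lies in $Y_{v'}\subseteq X$, so $A(c)\in E[X]$ and condition~1 of safeness forces $(b,c)\succsim_c A(c)$, i.e.\ $c_A(b,c)\le w(c)$; since $(b,c)$ enters both $Y_c$ and $Y_{v'}$, the left-hand side of \eqref{lp2st1new} is at least $w(c)+w(v')>w(c)\ge c_A(b,c)$, again a contradiction. Hence $v_0\in Y_{v'}$.

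With the claim in hand both parts follow quickly. From $v_0\in Y_{v'}\subseteq X_{v'}$ (Lemma~\ref{lemma24}) and Lemma~\ref{lemma7} we get $X=X_{v_0}\subseteq X_{v'}$, while $v'\in X$ gives $X_{v'}\subseteq X$; thus $X=X_{v'}$. For uniqueness, suppose $A$ had two edges entering $X$ with distinct heads $v'$ and $v''$. Then $v_0\in Y_{v'}\cap Y_{v''}$, so by laminarity we may assume $Y_{v'}\subseteq Y_{v''}$; as $v'\neq v''$, Corollary~\ref{cor21} forces $Y_{v'}=\{v'\}$ and hence $v_0=v'$. But then $A(v_0)$ enters $v_0\in Y_{v''}$ from outside $X\supseteq Y_{v''}$, producing a second edge of $A$ in $\delta^-(Y_{v''})$ besides $A(v'')$, which contradicts $|A\cap\delta^-(Y_{v''})|=1$ from Proposition~\ref{lemma14}(iii). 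Therefore $A\cap\delta^-(X)$ is a single edge and its head $v$ satisfies $X=X_v$.

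I expect the second case of the boundary-crossing argument to be the main obstacle, since it is there that the cost bound $c_A(b,c)\le w(c)$ coming from safeness must be pitted against the two positive dual contributions $w(c)$ and $w(v')$. This step relies essentially on the singleton structure of $Y_c$ provided by Corollary~\ref{cor21}, which in turn rests on the weight condition \eqref{3.11}; and on the total-preorder assumption, without which safeness would not yield the comparability $(b,c)\succsim_c A(c)$ used to bound $c_A(b,c)$.
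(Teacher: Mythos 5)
Your proof is correct and takes essentially the same route as the paper's: the paper likewise reduces everything to a crossing argument along a safe-edge path, with the identical two-case contradiction against constraint \eqref{lp2st1new} (the case ending at the entry-point via condition~2 of safeness, and the case ending at another vertex of $Y_{v'}$ via Corollary~\ref{cor21}), followed by $X=X_{v'}$ from Lemmas~\ref{lemma24} and~\ref{lemma7} and uniqueness from laminarity. The only cosmetic differences are that the paper anchors the crossing argument at the top strongly connected component $\bar{X}$ (proving $\bar{X}\subseteq Y_v$, which it reuses later in Lemma~\ref{lemma29}) rather than at the defining vertex $v_0$, and that it settles uniqueness by a direct violation of \eqref{lp2st1new} instead of your detour through Corollary~\ref{cor21} and Proposition~\ref{lemma14}(iii).
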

\begin{proof}
Clearly, $A\cap \delta^-(X)\neq \emptyset$. Let $(u,v)\in A\cap \delta^-(X)$. For each $X\in \mathcal{X}'$, let $\bar{X}$ be the strongly connected component of $(X, S(X))$ such that an edge in $S(X)$ does not enter. 
Note that $X_s=X$ for each $s\in \bar{X}$.
Let $y^*_{A}$ be the dual optimal solution determined by $A$ satisfying (\ref{prop17eq}), and 
let $\mathcal{F}(y^*_{A})=\{Y_v\mid v\in V\}$.

First, we will show that $\bar{X} \subseteq Y_v$.
We assume to the contrary that there 
exists a vertex $s\in \bar{X}\setminus Y_v$. Since $\bar{X}$ is strongly connected, there exists a path $P\subseteq S(X)$ from $s$ to each vertex in $X$ by the edges in $S(X)$. 
Let $e$ be an edge in $P$ that enters $Y_v$.
If the terminal vertex of $e$ is $v$, since $e\in S(X)$ and $(u,v)\in A\cap \delta^-(X)$, 
it must hold that $e\succ_v (u,v)$, which implies
$0=c_A(e)<c_A(u,v)=w(v)$.
However, it also holds that  $c_A(e)\ge y^*_{A}(Y_v)=w(v)$, a contradiction.
If the terminal vertex of $e$ is not $v$, let $t$ be the terminal vertex, that is, $e\in \delta^-(Y_v)\cap \delta^-(\{t\})$. Since $e\in S(X)$, $e$ is not dominated by any adge in $\delta^-(t)$, and hence, $c_A(e)\leq w(t)$.
Recall that $y^*_{A}(Y_v)=w(v)$ and $y(Y_t)=w(t)$ (Proposition \ref{prop18}). 
Therefore we have $c_A(e) < w(t) +w(v) =y^*_{A}(Y_t)+y^*_{A}(Y_v)$, 
violating  the constraint \eqref{lp2st1new} in (LP2). 
We thus conclude that $\bar{X}\subseteq Y_v$.

Next, we show that $v\in \bar{X}$.
By the definition of $\bar{X}$, the edges in $S(X)$ does not enter $\bar{X}$. Thus, for vertex $t$ such that $X=X_t$, it holds that $t\in \bar{X}$.
We show that $v\in \bar{X}$, by deriving $X=X_v$. 
For each $s\in \bar{X}$, from $\bar{X}\subseteq Y_v$ shown above and Lemma \ref{lemma24}, we obtain $s\in X_v$. Thus, by 
Lemma \ref{lemma7}, we have $X=X_s\subseteq X_v$.
Since $X\in \mathcal{X}'$ is maximal in $\mathcal{X}$, it holds that $X=X_v$ and thus, $v\in \bar{X}$.
 
Finally, we show that $|A\cap\delta^-(X)|=1$. 
Assume to the contrary that there exist two different edges $(u,v), (u',v')\in A \cap \delta^-(X)$. 
Note that $v\neq v'$.
Now, from the above argument, it follows that $\bar{X}\subseteq Y_v \cap Y_{v'}$. Thus, by the laminarity of  $\mathcal{F}(y_A^*)$, 
w.l.o.g.\ we assume 
$Y_v\subseteq Y_{v'}$.
Then, from $(u,v)\in A \cap\delta^-(X)$, and $Y_{v}\subseteq Y_{v'}\subseteq X$, we derive $(u,v)\in \delta^-(Y_v)\cap \delta^-(Y_{v'})$. 
However,  
this violates 
the constraint \eqref{lp2st1new} of (LP2), 
because 
$c_A(u,v)=w(v)$, and $y^*_{A}(Y_v)=w(v)$ and $y^*_{A}(Y_{v'})=w(v')$ by  Proposition \ref{prop18}.
Therefore we conclude that $|A\cap\delta^-(X)|=1$.
\end{proof}

\begin{lemma}
\label{lemma27}
Let $A$ be an $r$-arborescence in $D$ and let $X\in \mathcal{X}'$. If there exists an edge in 
$A[X]\setminus S(X)$,
then there is an $r$-arborescence more popular than $A$. 
\end{lemma}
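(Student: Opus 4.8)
The plan is to construct, from a bad edge $(a,b)\in A[X]\setminus S(X)$, an $r$-arborescence $A'$ with $\Delta_w(A',A)>0$; equivalently, by the identity $c_A(A')=c_A(A)+\Delta_w(A,A')$ established just before Proposition~\ref{prop12}, an arborescence of strictly smaller $c_A$-cost than $A$. First I would isolate a clean sufficient condition for success: if $A'$ is obtained from $A$ by reassigning incoming edges of vertices in $X$ only, so that (i) $A'$ is again an $r$-arborescence, (ii) no vertex of $X$ receives a strictly worse edge, and (iii) at least one vertex $q^*\in X$ receives a strictly better edge, then every vertex outside $X$ is unchanged and $\Delta_w(A',A)\ge w(q^*)>0$. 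The structural input I would exploit is the way $X=X_v\in\mathcal{X}'$ is generated in Lines~1--7: every vertex of $X$ is reachable from the source strongly connected component $\bar X$ of $(X,S(X))$ through safe edges. Hence every vertex of $X$, with the sole possible exception of a single source vertex, has an incoming safe edge; and for any such vertex all of its most preferred incoming edges are safe and lie in $E[X]$, since an external most preferred edge would force $S(X)\cap\delta^-(\cdot)=\emptyset$.

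Next I would locate the strict improver $q^*$ by a case analysis on why $(a,b)$ fails to be safe, using that $\precsim_b$ is a total preorder. In the first case some edge of $E[X]$ dominates $(a,b)$; then a most preferred such edge $(p,b)$ is safe and satisfies $(p,b)\succ_b(a,b)$, so $q^*=b$ is strictly improvable within $X$. In the second case $(a,b)$ is a most preferred internal edge but $b$ also admits an external most preferred edge, so $S(X)\cap\delta^-(b)=\emptyset$ and $b$ is optimally served by $(a,b)$. Here I would argue that some \emph{other} vertex must be suboptimal: $b$ is then the unique source vertex of $(X,S(X))$ lacking an incoming safe edge, so if every vertex of $X$ were optimal, each vertex other than $b$ would receive an internal safe edge and $b$ would receive $(a,b)\in E[X]$, whence $A\cap\delta^-(X)=\emptyset$, contradicting that $A$ is an $r$-arborescence. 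Thus some $q^*\in X\setminus\{b\}$ is suboptimal, and since its optimal edges coincide with its safe edges it can be strictly improved by a safe edge.

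With $q^*$ and a strictly improving safe edge $e^*=(p^*,q^*)$ in hand, I would set $A'(q^*)=e^*$. If this keeps an arborescence we are done by the sufficient condition above. The hard part is the remaining case, in which $p^*$ lies in the subtree of $q^*$ in $A$, so that $e^*$ closes a directed cycle $C$. To restore acyclicity I would break $C$ by reassigning one further vertex $c\in C\cap X$ to a safe edge entering $C$ from $X\setminus C$, which exists because $\bar X$ reaches all of $X$ through $S(X)$; since a safe edge is a most preferred edge of $c$, this reassignment makes $c$ weakly happier, so conditions (ii) and (iii) survive and $\Delta_w(A',A)\ge w(q^*)>0$. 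The main obstacle I expect is exactly this reconnection: guaranteeing that the cycle can always be broken by a single safe edge, re-rooting only vertices of $X$ while leaving the detached vertices outside $X$ untouched, and without creating a new cycle. This is where I would lean on the strong connectivity of $\bar X$ together with the laminarity of $\mathcal{X}$ from Lemma~\ref{lemma7}. It is also here that the total preorder is indispensable: only because a safe edge is weakly preferred to \emph{every} incoming edge can we reroute freely without lowering any vertex's satisfaction, a step that would fail for a general partial order.
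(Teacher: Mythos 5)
The decisive gap is the reconnection step, which you yourself flag as the main obstacle and never close---and which cannot be closed in the local form you propose. Because you never reduce to the case $|A\cap\delta^-(X)|=1$, the cycle $C$ closed by $e^*$ may leave and re-enter $X$, and it can meet $X$ precisely in $\bar{X}$ (take $p^*,q^*\in\bar{X}$, with the $A$-path from $q^*$ back to $p^*$ running outside $X$; note that any safe edge into a vertex of $\bar{X}$ has its tail in $\bar{X}$, so $q^*\in\bar{X}$ forces $p^*\in\bar{X}$). In that configuration every safe edge entering $C\cap X=\bar{X}$ has its tail inside $C$, so the rerouting edge from $X\setminus C$ that your repair needs simply does not exist; and even when it does exist, a single reassignment can create a new cycle, so one would need an iteration with a termination argument that the proposal does not contain. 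The paper avoids all of this by making the modification \emph{global} rather than local. It first invokes Lemma \ref{lemma25}: if $|A\cap\delta^-(X)|\neq 1$, or if $X\neq X_v$ for the head $v$ of the unique entering edge, then $A$ is not popular and the conclusion already holds. In the remaining case $v$ is the unique entry point and $X=X_v$, so $v$ reaches all of $X$ in $(X,S(X))$; one replaces the whole of $A[X]$ by a $v$-arborescence of safe edges, keeping $A(v)$ and everything outside $X$ intact. Acyclicity is automatic because the only edge of the new arborescence entering $X$ is $A(v)$; every vertex of $X\setminus\{v\}$ receives a safe, hence weakly most preferred, edge; and the head $t$ of the unsafe edge $f$ (necessarily $t\neq v$) improves strictly, giving $\Delta_w(A',A)\geq w(t)>0$. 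This reduction-plus-wholesale-replacement is the idea your proposal is missing.

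There are also two flaws in your case analysis. In your first case, a most preferred edge of $E[X]\cap\delta^-(b)$ need not be safe: safety additionally requires strictly dominating every edge of $\delta^-(b)\cap\delta^-(X)$, and under a total preorder an external edge \emph{tied} with the best internal edge destroys safety. This is harmless for the strict improvement at $b$, but it means the improving edge you produce may fail to be safe, which your later rerouting tacitly assumes. In your second case you assume the undominated external edge is tied with $(a,b)$, so that $b$ is ``optimally served''; if instead some external edge strictly dominates $(a,b)$, then $b$ is suboptimal, your counting argument concluding $A\cap\delta^-(X)=\emptyset$ no longer applies (all vertices of $X$ other than $b$ may well be optimally served by internal safe edges), and the only strict improvements available at $b$ are external edges, which your safe-edge machinery does not handle. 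In the paper's argument this situation never arises: after the reduction via Lemma \ref{lemma25}, the unsafe edge's head $t$ satisfies $t\neq v$, hence $t$ does have incoming safe edges, and they are exactly its most preferred edges.
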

\begin{proof}
By Lemma \ref{lemma25}, if $|A\cap \delta^-(X)|\neq 1$, then there is an $r$-arborescence more popular than $A$.
We thus assume that $|A\cap \delta^-(X)|=1$. Let $v\in X$ be the terminal vertex of the edge in $A\cap \delta^-(X)$.
Again by Lemma
\ref{lemma25}, $X\neq X_v$ implies that there is an $r$-arborescence more popular than $A$, 
and hence we further assume that $X=X_v$.

Denote the edge in $A[X] \setminus S(X)$ by $f=(s,t)$.
Then, it follows from $A(v)\in \delta^-(X)$ that $t\neq v$. 
Since $X=X_v$, we can construct a $v$-arborescence in a subgraph $(X,S(X))$.
Define an $r$-arborescence $A'$ in $D$ by replacing 
the edges in $A$ entering each vertex in $X \setminus \{v\}$
with this $v$-arborescence.
Now, since $A'(t)\in S(X)$ and $f \notin S(X)$, we have $A'(t)\succ_t f$.
Also, observe that 
$A'(t')\succsim_{t'} A(t')$
if $t'\in X \setminus \{t\}$ and $A'(t')=A(t')$ if $t\in V\setminus X$.
Thus, $\Delta_w(A',A)\geq w(t) > 0$, 
implying that $A'$ is more popular than $A$.
\end{proof}

\begin{lemma}
\label{lemma29}
For a popular arborescence $A$ and $X\in \mathcal{X}'$, let $\bar{X}$ be the strongly connected component 
in the subgraph $(X, S(X))$
that an edge in $S(X)$ does not enter. Then the terminal vertex $v$ of the edge in $A\cap\delta^-(X)$ 
belongs to $\bar{X}$ and
has minimum weight in $\bar{X}$.
\end{lemma}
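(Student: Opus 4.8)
The plan is to dispose of the first assertion by appeal to Lemma~\ref{lemma25}, and then to reduce the minimum-weight assertion to a single dual constraint.

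For the first assertion, recall that the proof of Lemma~\ref{lemma25} already establishes, for the terminal vertex $v$ of the unique edge in $A\cap\delta^-(X)$, both that $v\in\bar{X}$ and that $\bar{X}\subseteq Y_v$, where $Y_v\in\mathcal{F}(y_A^*)$ is the support set whose entry-point is $v$; moreover $X=X_v$ and $y_A^*(Y_v)=w(v)$ by Proposition~\ref{prop18}. So I would simply cite Lemma~\ref{lemma25} for $v\in\bar{X}$ and keep the inclusion $\bar{X}\subseteq Y_v$ and the identity $X=X_v$ on hand for the second part.

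To prove that $v$ has minimum weight in $\bar{X}$, fix an arbitrary $s\in\bar{X}$; the case $s=v$ is trivial, so assume $s\neq v$. Since $s\in\bar{X}\subseteq Y_v$ and $|\bar{X}|\ge 2$ (it contains both $v$ and $s$), we have $s\in Y_v\setminus\{v\}$ with $|Y_v|\ge 2$, so Corollary~\ref{cor21} yields $Y_s=\{s\}\in\mathcal{F}(y_A^*)$, and Proposition~\ref{prop18} gives $y_A^*(\{s\})=w(s)$ together with $y_A^*(Y_v)=w(v)$. Under assumption (\ref{3.11}), Proposition~\ref{prop20} tells us that the only sets of $\mathcal{F}(y_A^*)$ containing $s$ are $Y_v$ and $\{s\}$ (these are distinct, as $v\in Y_v$ while $v\notin\{s\}$). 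Hence $\sum_{Y\colon s\in Y}y_A^*(Y)=y_A^*(Y_v)+y_A^*(\{s\})=w(v)+w(s)$, and Proposition~\ref{prop19} applied to $s$ bounds this sum by $2w(s)$. Therefore $w(v)+w(s)\le 2w(s)$, that is, $w(v)\le w(s)$. As $s\in\bar{X}$ was arbitrary, $v$ attains the minimum weight in $\bar{X}$.

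The only real content is the identification of the two support sets containing $s$; once Corollary~\ref{cor21} and Proposition~\ref{prop20} pin these down as exactly $Y_v$ and $\{s\}$, the weight inequality drops out of the single dual bound in Proposition~\ref{prop19} (equivalently, from the constraint (\ref{lp2st1new}) for the root edge $(r,s)$, whose cost is $2w(s)$ because $A(s)\succ_s(r,s)$, and which enters both $Y_v$ and $\{s\}$). The main obstacle is therefore conceptual rather than computational: one must make sure the two-layer structure, which rests on the weight assumption (\ref{3.11}), is legitimately available for the set $X\in\mathcal{X}'$, and this is exactly what the inclusion $\bar{X}\subseteq Y_v$ and the identification $X=X_v$ inherited from Lemma~\ref{lemma25} provide.
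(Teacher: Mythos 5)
Your proposal is correct and takes essentially the same route as the paper's proof: it cites Lemma~\ref{lemma25} for the unique entering edge, $X=X_v$, $v\in\bar{X}$, and $\bar{X}\subseteq Y_v$, then combines Corollary~\ref{cor21} and Proposition~\ref{prop18} with the dual constraint on the root edge to conclude $w(v)\leq w(s)$. The only cosmetic difference is that you route the weight inequality through Propositions~\ref{prop19} and~\ref{prop20}, whereas the paper applies the constraint \eqref{lp2st1new} to $(r,u)$ directly; your use of Proposition~\ref{prop20} is harmless but unnecessary, since nonnegativity of $y_A^*$ already gives the needed lower bound on the sum.
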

\begin{proof}
It follows from Lemma \ref{lemma25} that $A\cap \delta^-(X)$ contains only one edge, and denote it by $(t,v)$. 
Again by Lemma \ref{lemma25}, 
we have $X = X_v$, and therefore, $v \in \bar{X}$.
Let $y_A^*$ be the dual solution satisfying \eqref{prop17eq}.
First, in the proof of Lemma \ref{lemma25}, we have shown that $\bar{X}\subseteq Y_v$.
Next, we show that $v$ has the minimum weight in $Y_v$. 
Let $u\in Y_v\setminus \{v\}$.
Then, from Corollary \ref{cor21}, we have $Y_u=\{u\}$.
By Proposition \ref{prop18}, we have that $y^*_{A}(Y_v)=w(v)$ and $y^*_{A}(Y_u)=w(u)$. 
Since $(r,u)$ enters $Y_v$ and $Y_u$, it follows that
\begin{align*}
&c_A(r,u)\geq y^*_{A}(Y_v)+y^*_{A}(Y_u)\\
\Leftrightarrow \  &2w(u)\geq w(v)+w(u)\\
\Leftrightarrow \  &w(u)\geq w(v).
\end{align*}
Thus, it is shown that $v$ has minimum weight in $Y_v$ and therefore it follows from $\bar{X}\subseteq Y_v$ that $v$ has the minimum weight in $\bar{X}$.
\end{proof}

\begin{lemma}
\label{lemma28}
For some $v\in V_G$, if all the vertices of minimum weight in $\bar{X}_v$ satisfy the condition of Line 10 in Algorithm 1, then there is no popular arborescence.
\end{lemma}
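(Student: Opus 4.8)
The plan is to argue by contradiction. Suppose a popular arborescence $A$ exists; I will contradict the hypothesis that every minimum-weight vertex of $\bar X_v$ satisfies conditions~1--3 of Line~10. Work with the integral dual optimum $y_A^\ast$ satisfying \eqref{prop17eq}, for which $\mathcal F(y_A^\ast)=\{Y_u : u\in V_G\}$ by Proposition~\ref{prop18}. By Lemma~\ref{lemma29} applied to $X_v\in\mathcal X'$, the unique edge of $A\cap\delta^-(X_v)$ enters a vertex $v^\ast\in\bar X_v$ that has minimum weight in $\bar X_v$; hence $v^\ast$ meets conditions~1--3, furnishing a vertex $s\in X_v\setminus\bar X_v$ and an edge $f=(f_s,v^\ast)\in(E[X_v]\setminus S(X_v))\cap\delta^-(v^\ast)$ as in Line~10. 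The goal is to show these witnesses are incompatible with the dual feasibility \eqref{lp2st1new} of $y_A^\ast$.

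First I would pin down $Y_{v^\ast}$. Since $A(v^\ast)\in\delta^-(X_v)\cap\delta^-(v^\ast)$, condition~3 yields $f\succ_{v^\ast}A(v^\ast)$, so $c_A(f)=0$ by \eqref{cost}. Feasibility then forces every $Y$ with $f\in\delta^-(Y)$ to satisfy $y_A^\ast(Y)=0$; as $v^\ast\in Y_{v^\ast}$, this means $f_s\in Y_{v^\ast}$, and since $f_s\neq v^\ast$ we get $|Y_{v^\ast}|\ge 2$. This unlocks Corollary~\ref{cor21}: each $u\in Y_{v^\ast}\setminus\{v^\ast\}$ has $Y_u=\{u\}$ with $y_A^\ast(\{u\})=w(u)$. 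As in the proof of Lemma~\ref{lemma29}, $(r,u)$ enters both $Y_{v^\ast}$ and $\{u\}$, so $2w(u)\ge c_A(r,u)\ge w(v^\ast)+w(u)$, giving $w(u)\ge w(v^\ast)$; thus $v^\ast$ has minimum weight in all of $Y_{v^\ast}$, not merely in $\bar X_v$.

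The final step splits on whether $s\in Y_{v^\ast}$. If $s\in Y_{v^\ast}$, then $s\in Y_{v^\ast}\setminus\{v^\ast\}$ (as $v^\ast\in\bar X_v$ while $s\notin\bar X_v$), so the minimality just shown gives $w(s)\ge w(v^\ast)$, contradicting condition~1. If $s\notin Y_{v^\ast}$, I would exploit condition~2: the $s$-to-$v^\ast$ path $P$ through $S(X_v)\cup\{f\}$ must cross into $Y_{v^\ast}$, and its first crossing edge $e=(e_s,e_t)$ cannot be $f$ (because $f_s\in Y_{v^\ast}$), hence $e\in S(X_v)$. Because no edge of $S(X_v)$ enters $\bar X_v$, the endpoint $e_t$ lies in $Y_{v^\ast}\setminus\bar X_v\subseteq Y_{v^\ast}\setminus\{v^\ast\}$, so $Y_{e_t}=\{e_t\}$. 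Since $e$ and $A(e_t)$ are both most-preferred incoming edges of $e_t$, we have $e\sim_{e_t}A(e_t)$ and thus $c_A(e)=w(e_t)$; but $e$ enters both $Y_{v^\ast}$ and $\{e_t\}$, so $\sum_{Y:\,e\in\delta^-(Y)}y_A^\ast(Y)\ge w(v^\ast)+w(e_t)>w(e_t)=c_A(e)$, violating \eqref{lp2st1new}. Either branch contradicts the existence of $A$.

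The main obstacle is the second branch: I must certify that the first edge of $P$ entering $Y_{v^\ast}$ is a genuine safe edge landing on a non-entry vertex of $Y_{v^\ast}$, so that its cost is exactly $w(e_t)$ and the two positive dual values $y_A^\ast(Y_{v^\ast})$ and $y_A^\ast(\{e_t\})$ over-subscribe the constraint. This hinges on the total-preorder assumption (to force $c_A(e)=w(e_t)$ via $e\sim_{e_t}A(e_t)$, where $A(e_t)\in A\cap E[X_v]\subseteq S(X_v)$ because $A(e_t)$ stays inside $Y_{v^\ast}$) and on the two-layer structure of Corollary~\ref{cor21}, which guarantees $Y_{e_t}=\{e_t\}$.
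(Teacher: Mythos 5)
Your proof is correct, but it takes a genuinely different route from the paper's. The paper argues \emph{primally}: after using Lemmas \ref{lemma27} and \ref{lemma29} to locate the entry-point $u$ of $X_v$ among the minimum-weight vertices of $\bar{X}_v$, it takes the Line-10 witnesses $s_u$, $f_u$ for $u$ and explicitly builds a competing arborescence $B$ by replacing $A[X_v]\cup\{A(u)\}$ with $(r,s_u)$ plus an $s_u$-arborescence in $(X_v,S(X_v)\cup\{f_u\})$; condition 2 makes this construction possible, condition 3 makes $u$ strictly better off while only $s_u$ is worse off, and condition 1 gives $\Delta_w(B,A)=w(u)-w(s_u)>0$, contradicting popularity (the case $|\bar{X}_v|=1$ is treated separately, where the Line-10 condition already contradicts the definition of $\bar{X}_v$). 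You instead argue \emph{dually}: you keep $A$ fixed and show that the Line-10 witnesses for the entry-point $v^*$ are incompatible with feasibility of $y_A^*$ --- condition 3 forces $c_A(f)=0$, which pins $f_s$ inside $Y_{v^*}$ and gives $|Y_{v^*}|\geq 2$, after which Corollary \ref{cor21} and the split on whether $s\in Y_{v^*}$ yield either a contradiction with condition 1 or an edge of the condition-2 path that enters both $Y_{v^*}$ and a singleton of $\mathcal{F}(y_A^*)$, over-subscribing constraint \eqref{lp2st1new}. Your route needs no case split on $|\bar{X}_v|$ and never constructs a new arborescence, but it leans harder on the dual machinery; the paper's route is more elementary and exhibits the more popular arborescence explicitly. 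Two steps you should spell out: (i) concluding $e_t\notin\bar{X}_v$ uses $\bar{X}_v\subseteq Y_{v^*}$ (established in the proof of Lemma \ref{lemma25}), since only then does $e_s\notin Y_{v^*}$ rule out $e$ lying entirely inside $\bar{X}_v$; and (ii) the inclusion $A\cap E[X_v]\subseteq S(X_v)$ is exactly Lemma \ref{lemma27} and needs popularity of $A$, not merely that $A(e_t)$ stays inside $Y_{v^*}$ --- though for your final inequality the weaker fact suffices that $A(e_t)\in E[X_v]$ (by Lemma \ref{lemma25}) cannot dominate the safe edge $e$, which already gives $c_A(e)\leq w(e_t)<w(v^*)+w(e_t)$.
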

\begin{proof}
Let $v\in V_G$ and $\bar{M}_{v}$ be the set of the vertices with the minimum weight in $\bar{X}_v$.
Assume to the contrary that 
every vertex $v'\in \bar{M}_v$ satisfies the condition of Line 10, 
i.e.,\ there exist $s_{v'}\in X_v\setminus \bar{X}_v$ and $f_{v'} \in (E[X_v]\setminus S(X_v))\cap \delta^-(v')$ satisfying the three conditions, and there exists a popular arborescence $A$.

First, consider the case when $|\bar{X}_v|=1$
i.e., $v=v'$.
By the definition of $\bar{X}_v$, there is no safe edge that enters $v'$ since all vertices in $X_v$ are reachable by safe edges from $v'$.
In this case, however, it follows from condition 3 that the most preferred edge in $\delta^-(v) \cap E[X_v]$ satisfies the definition of safe edges, 
a contradiction.

Next, suppose that $|\bar{X}_v|\geq 2$. 
Note that $A[X_v]\subseteq S(X_v)$ by Lemma \ref{lemma27}. 
Let $u$ be the entry-point of $X_v$ for $A$. 
By Lemma \ref{lemma29}, we have $u\in \bar{M}_v$ and hence it follows that there exist $s_u\in X_v \setminus \bar{X}_v$ and $f_u \in (E[X_v]\setminus S(X_v))\cap \delta^-(u)$ satisfying 
the three conditions
in Line 10.
Let $B$ be the $r$-arborescence obtained from $A$ by replacing the edges in 
$A[X_v] \cup A(u)$
with $(r, s_u)$ and $s_u$-arborescence in $(X_v, S(X_v) \cup \{f_u\})$.
It then follows from $\Delta_w (B,A)=w(u)-w(s_u)>0$ that $B$ is more popular than $A$, which contradicts that $A$ is a popular arborescence.
\end{proof}

\begin{theorem}
\label{theorem19}
If $D$ admits a popular arborescence, then our algorithm finds one.
\end{theorem}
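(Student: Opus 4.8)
The plan is to fix a popular arborescence $A$ together with the dual optimal solution $y_A^*$ of Proposition \ref{prop18}, so that $\mathcal{F}(y_A^*)=\{Y_v : v\in V_G\}$ with $y_A^*(Y_v)=w(v)$ for each $v$. The algorithm can only fail by returning ``No popular arborescence'': either through the test of Line~10, or when $D'$ contains no $r$-arborescence. Hence it suffices to exclude both failures; once they are excluded the algorithm returns some edge set $A^*$, which is popular by Theorem \ref{theorem23}. The first failure is immediate from the contrapositive of Lemma \ref{lemma28}: since the popular arborescence $A$ exists, for every $X_v\in\mathcal{X}'$ it cannot be the case that all minimum-weight vertices of $\bar{X}_v$ satisfy the three conditions of Line~10, so the algorithm never returns at Line~11.

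The substance is to exhibit an $r$-arborescence inside $D'=(\mathcal{X}'\cup\{r\},E')$. First I would record that, by Lemma \ref{lemma7} and laminarity, the maximal sets $\mathcal{X}'$ partition $V_G$, so contracting each $X\in\mathcal{X}'$ to a single node turns $A$ into a candidate $\hat{A}$ on $\mathcal{X}'\cup\{r\}$. By Lemma \ref{lemma25}, each $X\in\mathcal{X}'$ is entered by exactly one edge $A(v_X)$ of $A$ and $X=X_{v_X}$; hence every non-root node of $\hat{A}$ has in-degree exactly one. Since $A$ is an $r$-arborescence, each $v_X$ is reachable from $r$ in $A$, and contracting such a path witnesses that every node of $\hat{A}$ is reachable from $r$. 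In-degree one at all non-root nodes together with reachability from $r$ forces $\hat{A}$ to be an $r$-arborescence.

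It then remains to show $\hat{A}\subseteq E'$, i.e.\ that the contraction of each $A(v_X)$ is actually inserted by the block beginning at Line~13. The bridge is Lemma \ref{lemma29}: the terminal vertex $v_X$ of $A(v_X)$ is a minimum-weight vertex of $\bar{X}$, so $v_X$ is examined in the loop of Line~13. I would then verify that $A(v_X)$ passes the test of Line~14, namely that it is undominated among $\delta^-(X)$. Suppose to the contrary that some $(t,v_X)\in\delta^-(X)$ satisfies $(t,v_X)\succ_{v_X}A(v_X)$; then $c_A(t,v_X)=0$. But $t\notin X\supseteq Y_{v_X}$ while $v_X\in Y_{v_X}$, so $(t,v_X)$ enters $Y_{v_X}$, and constraint \eqref{lp2st1new} gives $c_A(t,v_X)\ge y_A^*(Y_{v_X})=w(v_X)>0$, a contradiction. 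Therefore the contraction of $A(v_X)$ is added to $E'$, so $\hat{A}\subseteq E'$ and $D'$ contains an $r$-arborescence; the algorithm proceeds to return an edge set $A^*$.

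The step I expect to be the main obstacle is this last one: matching the contracted edges of $A$ with the edges the algorithm actually inserts into $E'$. Here the total-preorder assumption and the duality constraint \eqref{lp2st1new} do the work, forcing $A(v_X)$ to be undominated among the incoming edges of $X$ so that it survives the test of Line~14. A secondary subtlety worth care is the acyclicity of $\hat{A}$; I would argue this from the in-degree-one-plus-reachability characterization of arborescences rather than from any convexity of the contracted parts, which need not hold in general. Combining these, $D'$ admits an $r$-arborescence, the algorithm avoids both failure branches, and by Theorem \ref{theorem23} the returned $A^*$ is popular, completing the proof.
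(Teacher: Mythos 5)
Your proposal is correct and follows essentially the same route as the paper's proof: ruling out the Line~10 rejection via the contrapositive of Lemma~\ref{lemma28}, using Lemmas~\ref{lemma25} and~\ref{lemma29} together with the dual constraint \eqref{lp2st1new} and the inclusion $Y_{v_X}\subseteq X$ (which is exactly Lemma~\ref{lemma24}, worth citing explicitly) to show that each edge $A\cap\delta^-(X)$ survives the Line~14 test and lands in $E'$, and then concluding via Theorem~\ref{theorem23}. Your contraction argument for why $D'$ contains an $r$-arborescence (in-degree one at every non-root node plus reachability from $r$) just spells out carefully what the paper asserts more briefly through reachability, so it is a welcome elaboration rather than a different approach.
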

\begin{proof}
Let $A$ be a popular arborescence and let $y_A^*$ be the dual optimal solution satisfying (\ref{prop17eq}). 
By Lemma \ref{lemma25}, for each $X\in \mathcal{X}'$, it holds that $|A\cap \delta^-(X)|=1$.
Let $e_X=(u,v)\in A\cap\delta^-(X)$.
Then $X=X_v$ holds by Lemma \ref{lemma25} and thus, we have $v\in \bar{X}$.

First, we show that $e_X=(u,v)$ 
is not dominated by any adge $(u',v)\in \delta^-(X)$.
By Lemma \ref{lemma24}, we have $Y_v\subseteq X_v =X$. If $(u',v)\in \delta^-(X)$ dominates $(u,v)$, then $c_A(u',v)=0$. 
However, since $(u',v)\in \delta^-(Y_v)$ holds, this 
violates the constraint \eqref{lp2st1new} in (LP2).
Thus, $e_X=(u,v)$ is not dominated by an edge in $\delta^-(X)\cap \delta^-(v)$.

From Lemma \ref{lemma29}, the vertex $v$ has the minimum weight in $\bar{X}$. 
Furthermore, from Lemma \ref{lemma28}, when $D$ admits a popular arborescence, there exists a vertex that has minimum weight in $\bar{X}$ which does not satisfy conditions of Line 10 in Algorithm 1.
For such a vertex $v'$, our algorithm adds to $E'$ an edge in $\delta^-(X) \cap \delta^-(v')$ that is not dominated by any edge in $\delta^-(X)$.
Thus, we have $e_X \in E'$.
Since there exists an $r$-arborescence in $D$ and $e_X \in E'$ holds, 
for the graph $D'$ constructed in Line 17 of the algorithm, each $X \in \mathcal{X}'$ is reachable from $r$ using the edges in $E'$.
Hence $E'$ contains an $r$-arborescence $A'$ in $D'$.
Therefore, the algorithm returns an edge set $A^*$, which is a popular arborescence in $D$ by Theorem \ref{theorem23}.
\end{proof}

\section{Conclusion}
In this paper, we have provided an algorithm for finding a weighted popular branching,
which extends the algorithm of Kavitha et al., when the weights of each vertex satisfy condition \eqref{3.11} and the preferences of each vertex are given by a total preorder. 

An apparent future work is to analyze the computational complexity of the weighted popular branching problem
in which the vertex weights do not necessarily satisfy condition \eqref{3.11} and the preferences are given by a partial order.
It is also of interest to extend the arguments for \emph{unpopularity margin}, 
\emph{unpopularity factor}, 
and \emph{popular branching polytope} 
in \cite{kavitha20} to weighted popular branchings.

\end{document}